\newtheorem{theorem}{Theorem}[section]
\newtheorem{proposition}[theorem]{Proposition}
\newtheorem{lemma}[theorem]{Lemma}
\newtheorem{corollary}[theorem]{Corollary}
\theoremstyle{definition}
\newtheorem{definition}[theorem]{Definition}
\theoremstyle{remark}
\newcommand{\R}{\mathbb{R}}
\newcommand{\Z}{\mathbb{Z}}
\newcommand{\inn}{\textnormal{in}}
\newcommand{\out}{\textnormal{out}}
\tikzset{
    middlearrow/.style n args={3}{
        draw,
        decoration={
            markings,
            mark=at position 0.5 with {
                \arrow[scale=2]{#1};
                \path[#2] node {$#3$};
            },
        },
        postaction=decorate
    }
}
\tikzset{cross/.style={cross out, draw=black, minimum size=20*(#1-\pgflinewidth), inner sep=0pt, outer sep=0pt},
cross/.default={1pt}}
\begin{document}

\begin{center}
{\large{\bf 
Complete heteroclinic networks derived from graphs consisting of two cycles}}\\
\mbox{} \\
\begin{tabular}{cc}
{\bf Sofia B.\ S.\ D.\ Castro$^{\dagger}$} & {\bf Alexander Lohse$^{\ddagger,*}$} \\
{\small sdcastro@fep.up.pt} & {\small alexander.lohse@uni-hamburg.de}
\end{tabular}

\end{center}

\noindent $^{*}$ Corresponding author.

\noindent $^{\dagger}$ Faculdade de Economia and Centro de Matem\'atica, Universidade do Porto, Rua Dr.\ Roberto Frias, 4200-464 Porto, Portugal.

\noindent $^{\ddagger}$ Fachbereich Mathematik, Universit\"at Hamburg, Bundesstra{\ss}e 55, 20146 Hamburg, Germany

\begin{abstract}
We address the question how a given connection structure (directed graph) can be realised as a heteroclinic network that is complete in the sense that it contains all unstable manifolds of its equilibria. For a directed graph consisting of two cycles we provide a constructive method to achieve this: (i) enlarge the graph by adding some edges and (ii) apply the simplex method to obtain a network in phase space. Depending on the length of the cycles we derive the minimal number of required new edges. In the resulting network each added edge leads to a positive transverse eigenvalue at the respective equilibrium. We discuss the total number of such positive eigenvalues in an individual cycle and some implications for the stability of this cycle.
\end{abstract}


\noindent {\em Keywords:} heteroclinic network, heteroclinic cycle, directed graph, stability, completeness

\vspace{.3cm}

\noindent {\em AMS classification:} 34C37, 37C80, 37C75
\vspace{2cm}

\section{Introduction}
It is well-known that a heteroclinic network can be represented as a directed graph (digraph) by identifying equilibria and connections in the network with vertices and edges in the digraph. Every such digraph is strongly connected, i.e., there is a directed path between any given pair of vertices. Conversely, given a digraph several methods exist to design a dynamical system, defined through a differential equation, exhibiting a heteroclinic network corresponding to the digraph.

We are interested in using the simplex method from~\cite{AshPos2013} to construct complete heteroclinic networks whose corresponding digraphs contain a given subgraph that consists of two cycles. As proved in~\cite{AshPos2013}, this method can realise any strongly connected digraph $G$ without 1- and 2-cycles as a heteroclinic network $X$ such that each equilibrium in $X$ lies on its own coordinate axis and connections are contained in coordinate planes. The required space dimension is equal to the number of vertices in the graph.

A heteroclinic network is called complete if it contains the entire unstable manifolds of all its equilibria. This notion is not to be confused with that of a complete graph, where there is an edge between any pair of vertices. Completeness is a necessary condition for asymptotic stability of the network and therefore of interest for the long-term ``visibility'' of a dynamical structure. Typically, a network resulting from the simplex method is not complete. This is because any equilibrium with two or more outgoing connections will have an unstable manifold of dimension at least two, and only in special cases will all initial conditions within this manifold limit to other equilibria in the network.

Given a strongly connected digraph $G$, we therefore ask whether or not there is another digraph $G'$ with $G \subset G'$ such that the realisation of $G'$ by the simplex method is a complete heteroclinic network. In general, there are two mechanisms to enlarge $G$ and possibly obtain a suitable $G'$: (i) adding a new vertex, (ii) adding a new edge between existing vertices. Both can be used finitely many times to find a suitable $G'$.

In this paper we are only concerned with adding edges, i.e.\ we do not introduce new vertices in the prescribed connection structure. For related results regarding the addition of vertices see e.g.~\cite{AshCasLoh2020}. (Note that adding a vertex also implies adding edges that connect this vertex to the others, since $G'$ should be a strongly connected digraph.) We therefore ask more precisely:

\begin{itemize}
\item[(Q1)] Given a strongly connected digraph $G$ without 1- and 2-cycles, can we add edges between the vertices in $G$ to obtain a digraph $G'$ such that the realisation of $G'$ through the simplex method produces a complete heteroclinic network $X$?
\end{itemize}

If the answer is yes, we call $X$ a \emph{complete realisation of $G \subset G'$}. 
The vector field for this realisation is obtained by equation (2) in ~\cite{AshPos2013} for the graph $G'$. Additionally, we may then ask:

\begin{itemize}
\item[(Q2)] What is the minimal number of edges that have to be added in $G$ to obtain a suitable $G'$ that yields a complete realisation?
\end{itemize}

We call a realisation $X$ of $G'$ a \emph{minimal complete realisation of $G \subset G'$} if the number of edges added to $G$ to obtain $G'$ is minimal. Note that (minimal) realisations need not be unique. Also, a heteroclinic network $X$ is typically a realisation of many different subgraphs $G \subset G'$ (corresponding to subcycles/subnetworks of $X$).

Since we go back and forth between looking at digraphs and the corresponding heteroclinic networks generated by the simplex method we use the figures in this paper both as representing graphs and networks. When thinking about graphs, we talk about vertices and edges, whereas for networks we use equilibria and connections, respectively.

In this paper we are concerned with adding edges/connections to obtain complete realisations for a particular class of digraphs $G$, namely those which are the connected union of two cycles $C_1$ and $C_2$. For such a graph $G$, the two cycles must have at least one common vertex. They may or may not have one or more common edges. If they share more than one edge, then all such edges must occur consecutively without interruption by edges that belong to only one cycle, because otherwise there would be more than two cycles, see Figure~\ref{fig:donut-simple}. A list of examples from the literature for networks that are the union of two cycles is provided in Appendix~\ref{app:list}. 

We provide full answers to questions (Q1) and (Q2) for this class of digraphs/networks. Moreover, we note that the answer to (Q1) is not always affirmative for digraphs with more than two cycles as shown in section~\ref{sec:further}.

As explained above we use the simplex method to create the desired heteroclinic networks from such graphs. In particular, for the resulting systems all coordinate axes are dynamically invariant, the equilibria lie on coordinate axes and the coordinate planes contain heteroclinic trajectories.

As an example, consider the well-known Kirk and Silber network~\cite{KirSil1994}, shown in Figure~\ref{fig-KS-Bowtie-House} left. It is not complete, because the two-dimensional unstable manifold of $\xi_2$ is not entirely contained in the network. However, it can be embedded in a complete network with an additional equilibrium as discussed in~\cite{AshCasLoh2020}. This bigger network contains additional cycles and is of a different nature, since the added equilibrium is not on a coordinate axis. Thus, the network cannot be thought of as generated by the simplex method.

Alternatively, a connection between $\xi_3$ and $\xi_4$ can be added in the Kirk and Silber network, creating the $(B_3^-,B_3^-,C_4^-)$ network from~\cite{CasLoh2016b}. This network first appeared in Figure~4(b) in~\cite{Bra1994}. It contains three cycles and is a minimal complete realisation of the graph corresponding to the Kirk and Silber network in the above sense.

This example prompts a brief comparison of the method presented here and that of Ashwin {\em et al.} \cite{AshCasLoh2020}: The present method adds edges to the graph, transforming the original structure into a more connected one. That of \cite{AshCasLoh2020} adds vertices and edges that are typically not thought as becoming part of the network but are seen as a side structure. Thus, the networks of interest in \cite{AshCasLoh2020} are not complete but rather ``almost complete''.  They can be seen as the outcome of a pitchfork bifurcation at one of the equilibria in a network created by the addition of edges. This bifurcation produces an excitable network, see \cite{AshPos2015}.
\medbreak

Finally, in addition to the questions (Q1) and (Q2) outlined above, it is natural to also ask the following:

\begin{itemize}
\item[(Q3)] For a digraph consisting of two cycles, how do the answers to (Q1) and (Q2) change if we wish to prescribe which of the two resulting heteroclinic cycles is the more stable one?
\end{itemize}

In section~\ref{sec:stability} we address this by deriving the number of positive transverse eigenvalues created in a cycle through the choice of a particular complete realisation. Furthermore, we explain the effect of those eigenvalues on the stability of the cycle. This interest in stability justifies the restriction of our attention to heteroclinic networks realised by the simplex method of~\cite{AshPos2013}: In fact, the simplex method produces quasi-simple heteroclinic cycles and networks and there are suitable tools to study their stability, see e.g.~\cite{GarCas2019}. We recall that a heteroclinic cycle that is part of a heteroclinic network can never be asymptotically stable, see~\cite{PodCasLab2019}.
More detail is provided in the next section.

In the context of coupled cell networks several authors are interested in studying the effect of adding edges in the preservation of synchrony. We mention the work of~\cite{MilSunNis2010} and \cite{PoiPadPer2019} as examples where the adjacency matrix is changed by the creation of edges between cells, leading (or not) to changes in synchrony. Our addition of edges is different and represents the addition of a connection between two equilibria for the dynamics. Since we do not have internal dynamics, the issue of synchrony does not arise in our setting.
An interesting account of heteroclinic networks in the context of coupled cell networks can be found in~\cite{Fie2015}. We do not explore here the relation between adding edges to a network of coupled cells  and the addition of connections in heteroclinic networks.

The task of realising a digraph as a heteroclinic network is of interest from a modelling perspective. Our method introduces no additional nodes in the graph, only edges. This absence of additional equilibria in the resulting heteroclinic network is important, e.g., in the context of population dynamics, where a new equilibrium corresponds to a population distribution that was not represented in the original configuration. Preservation of the stability of the cycles in the original network, again in the context of population dynamics or game theory, corresponds to preserving the surviving species or the actions. See \cite{CasFerLab2024} for a concrete example. Another potential field of application is that of social networks or opinion dynamics where the number of connections plays an important role.

The rest of this paper is structured as follows: in section~\ref{sec:prelim} we introduce some concepts from graph theory and the study of heteroclinic dynamics that are required for our investigations. In section~\ref{sec:completions} we prove our results on the existence of (minimal) complete realisations of graphs consisting of two cycles before we address the stability of the cycles in the created networks in section~\ref{sec:stability}. Section~\ref{sec:further} contains some comments about generalizations and limitations of our method to create complete networks, before section~\ref{sec:conclusion} concludes.

\section{Preliminaries}\label{sec:prelim}
We need some definitions from graph theory and some from the study of heteroclinic networks.

A digraph $G=(V,A)$ is given by a set of vertices $V=\{v_1, \ldots ,v_m\}$ and directed edges $[v_j \to v_i] \in A$ connecting them. A subset of $k$ distinct vertices in $V$ connected in a cyclic way is called a cycle of length $k$ or a $k$-cycle.

The out-degree of a vertex $v_j \in V$ is the number of outgoing directed edges $[v_j \to v_i] \in A$. We call a vertex with an out-degree greater than 1 a \emph{distribution vertex}. Similarly, the in-degree of $v_j \in V$ is the number of incoming directed edges $[v_i \to v_j] \in A$, and we call a vertex with an in-degree greater than 1 a \emph{collection vertex}.
 
Given an ordinary differential equation $\dot x =f(x)$ on $\R^n$ with a smooth vector field $f$ and a finite set of hyperbolic equilibria $\xi_1, \ldots ,\xi_m$, a non-empty intersection of invariant manifolds $W^u(\xi_j) \cap W^s(\xi_i) \neq \emptyset$ is called a heteroclinic connection from $\xi_j$ to $\xi_i$. We also denote it by $[\xi_j \to \xi_i]$.
\footnote{When $\xi_j=\xi_i$ we say the connection is homoclinic.}

A collection $X$ of such equilibria and heteroclinic connections can be associated with a directed graph by drawing a vertex for each equilibrium and an edge for each heteroclinic connection. If the resulting digraph is a cycle, we call $X$ a heteroclinic cycle. A heteroclinic network is a connected union of finitely many heteroclinic cycles, and thus associated with a strongly connected digraph containing more than one cycle. It is sometimes important to carefully distinguish between the \emph{full set of connections} $W^u(\xi_j) \cap W^s(\xi_i)$ and a \emph{single connecting trajectory} within it. For this section and the next we focus on the full sets of connections, while in Section~\ref{sec:stability} we give some results towards understanding the stability of subcycles consisting of specific choices of connecting trajectories.

As mentioned in the introduction, for a given digraph $G$ without 1- and 2-cycles, the simplex method in \cite{AshPos2013} provides a polynomial vector field $f:~\R^n~\to~\R^n$ such that the dynamical system given through
\[\dot x = f(x) \]
contains a heteroclinic network $X$ that corresponds to $G$ in the above sense. If $G$ has $n$ vertices, then the required space dimension is also $n$, as the simplex method realises each vertex in $G$ as an equilibrium on its own coordinate axis. Moreover, there always exists a connecting trajectory between two equilibria that is contained in the coordinate plane spanned by their respective directions. The resulting system has $\Z_2^n$ symmetry, with $\Z_2$ acting by reflection across each coordinate hyperplane. Thus, all coordinate subspaces are flow-invariant.

If an equilibrium $\xi_j \in X$ is associated with a distribution (collection) vertex in the digraph $G$, we call $\xi_j$ a \emph{distribution (collection) node in $X$}. By construction, the out-degree of a vertex in $G$ typically equals the dimension of the unstable manifold of the corresponding equilibrium in $X$. Therefore, a distribution node has an unstable manifold of dimension two or higher.

Distribution vertices/nodes may give rise to so-called $\Delta$-cliques: in the graph, a $\Delta$-{\em clique} is a triangle that is not strongly connected, see Definition 2.2 in \cite{AshCasLoh2020}. Thus, any $\Delta$-clique contains a distribution vertex with two outgoing edges, see Figure~\ref{fig-Delta-clique-graph}. Podvigina {\em et al.} \cite{PodCasLab2020} call such a vertex a $\emph{b-point}$ of the $\Delta$-clique.

In the corresponding heteroclinic network, a $\Delta$-clique leads to a two-dimensional unstable manifold that is bounded, we refer to Definition 2.1 in \cite{PodCasLab2020}: all trajectories in the unstable manifold of the distribution node in the $\Delta$-clique converge to one of the two other equilibria. In Figure~\ref{fig-Delta-clique-net}, we show the two-dimensional unstable manifold of $\xi_2$ in the Kirk-Silber network: depending on the choice of parameters, it may be unbounded (left), form a $\Delta$-clique $\Delta_{234}$ (middle) or an additional equilibrium appears which connects to $\xi_3$ and $\xi_4$ (right). In the case of a $\Delta$-clique, the entire unstable manifold of $\xi_2$ in $\Delta_{234}$ limits to the set $\{\xi_3, \xi_4\}$ which is a subset of the network. 

For a $\Delta$-clique in the graph, the simplex method generates a $\Delta$-clique in the corresponding heteroclinic network.

\begin{figure}[!htb]
 \centerline{
\begin{tikzpicture}[xscale=1, yscale=1]
\filldraw[black] (0,2.5) circle (2pt) node[above] {\small$\xi_2$};
\filldraw[black] (-2,0) circle (2pt) node[left] {\small$\xi_1$};
\filldraw[black] (2,0) circle (2pt) node[right] {\small$\xi_3$};
\path (-2,0) edge[black,thick, middlearrow={>}{}{}] (0,2.5);
\path (0,2.5) edge[black,thick, middlearrow={>}{}{}] (2,0);
\path (-2,0) edge[black,thick, middlearrow={>}{}{}] (2,0);
\end{tikzpicture}
}
 \caption{A $\Delta$-clique $\Delta_{123}$ in a graph with distribution vertex $\xi_1$.\label{fig-Delta-clique-graph}}
 \end{figure}

\begin{figure}[!htb]
\centerline{\includegraphics[width=4cm]{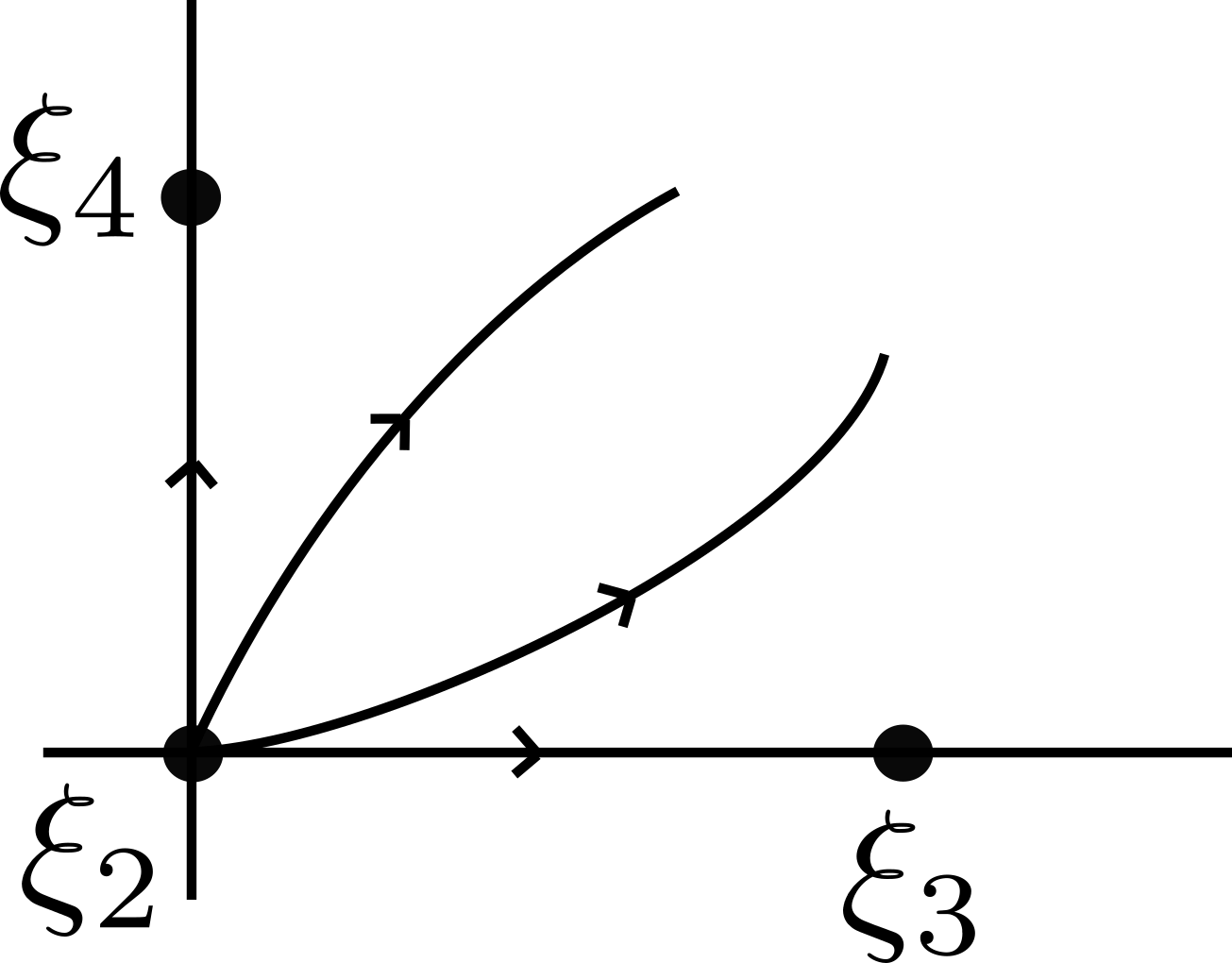} \quad \includegraphics[width=4cm]{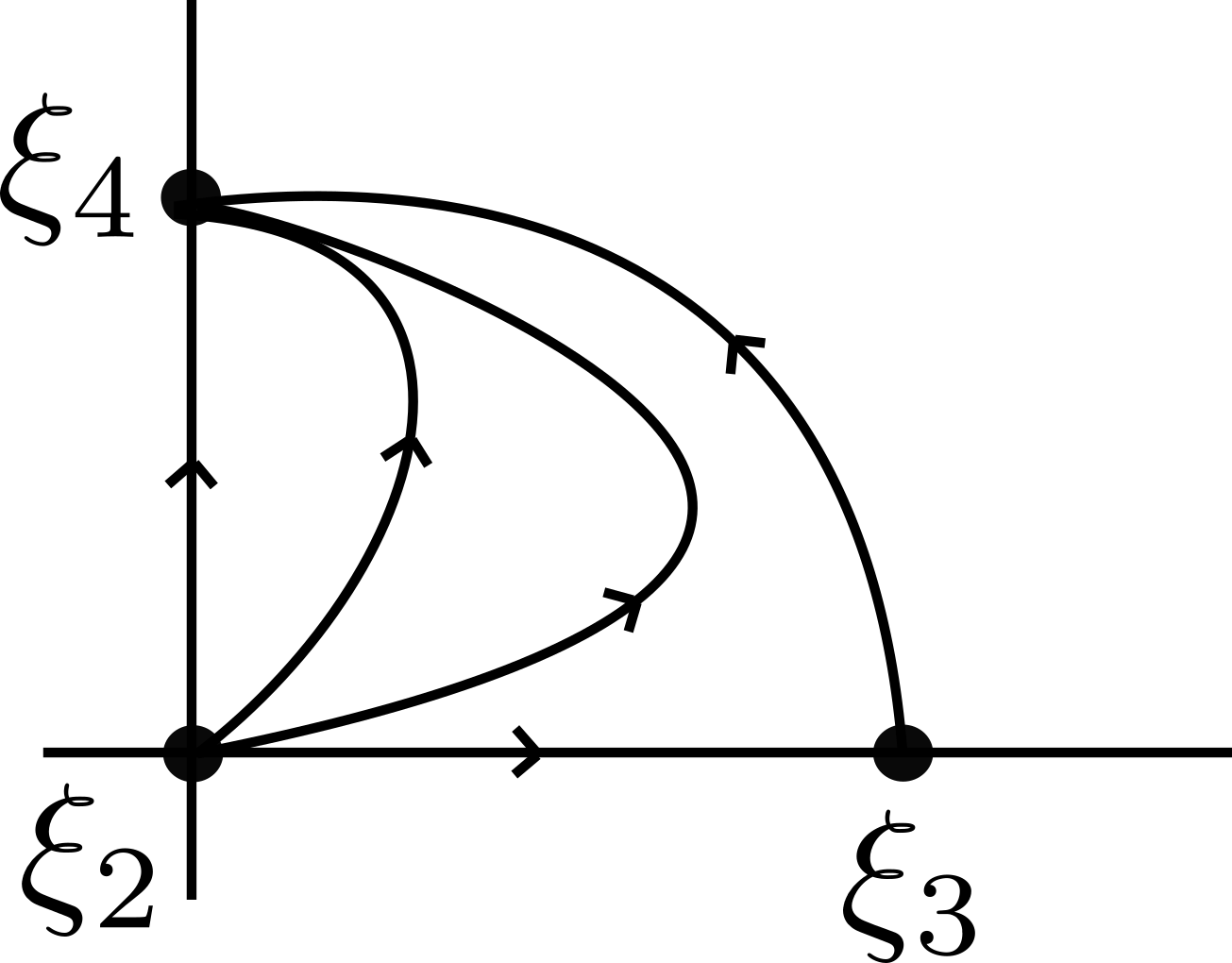} \quad \includegraphics[width=4cm]{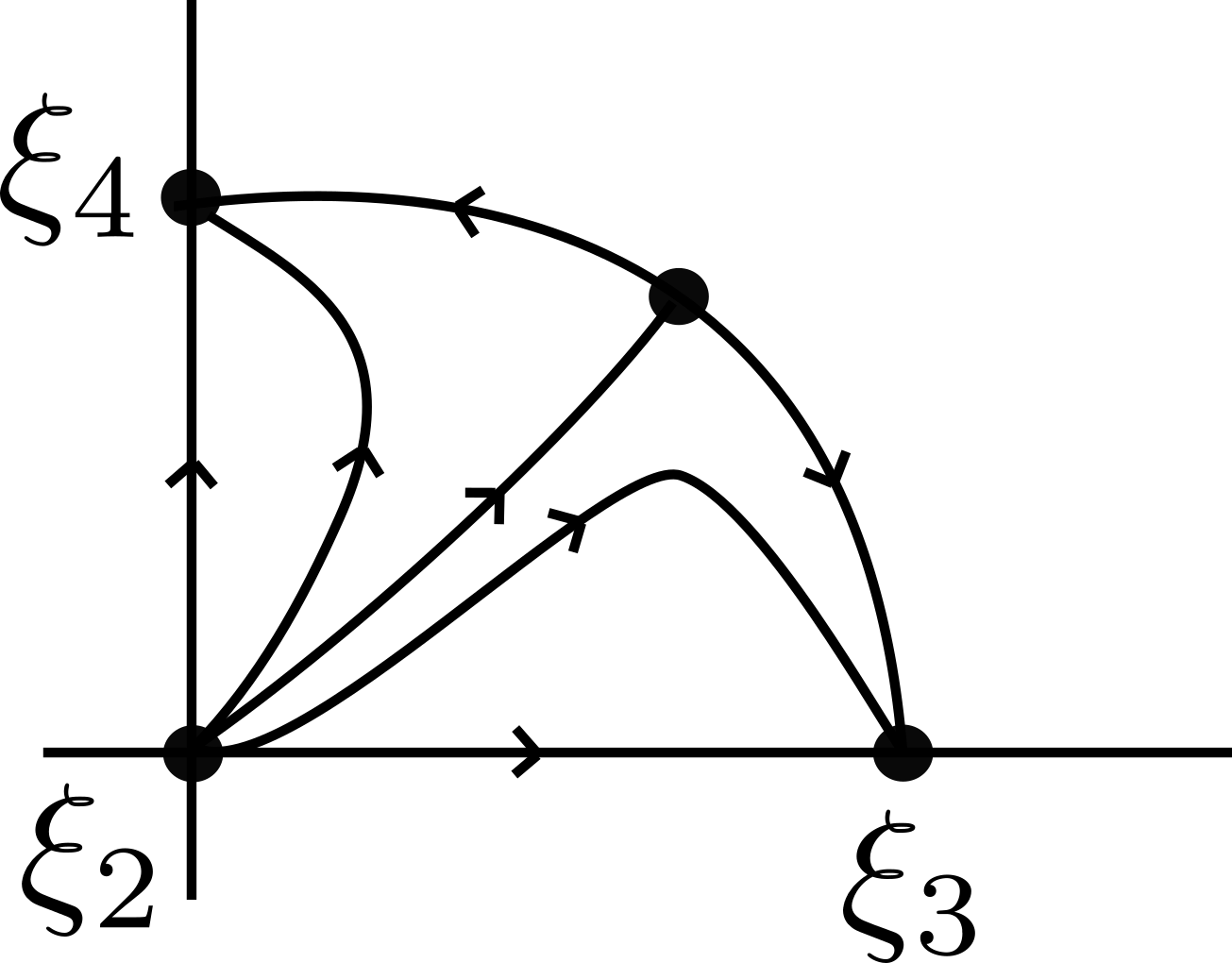}}
\caption{A node with two outgoing connections gives rise to a two-dimensional unstable manifold $W^u(\xi_2)$ in the corresponding heteroclinic structure (left). In the middle, $W^u(\xi_2)$ is bounded by being included in the $\Delta$-clique $\Delta_{234}$. On the right, $W^u(\xi_2)$ is bounded by the unstable manifold of an additional equilibrium.}\label{fig-Delta-clique-net}
\end{figure}

A heteroclinic network is called \emph{complete}, see \cite{AshCasLoh2020}, if it contains the unstable manifolds of all its equilibria. From the paragraph above, it follows immediately that $\Delta$-cliques play a crucial role for the completeness of networks generated by the simplex method. The method we introduce below ensures that all distribution nodes are b-points of $\Delta$-cliques.

An undirected graph is called \emph{complete} if for every pair of its vertices there is an edge between these vertices. If these vertices are given an orientation (turning the undirected graph into a digraph), then the graph is called a \emph{tournament}. The notions of completeness for heteroclinic networks and graphs are independent in the following sense:

\begin{itemize}
	\item[(a)] A complete heteroclinic network does not necessarily correspond to a directed graph that is a tournament. An example is the network in Example 1 of \cite{PodCasLab2020}, see Figure~\ref{fig-complete-no-tournament}.
	\item[(b)] A graph that is a tournament does not necessarily produce a complete heteroclinic network when realised through the simplex method. This is explained in Section~\ref{sec:further}, see Figure~\ref{fig-3d-unstable-manifold}.
\end{itemize}

\begin{figure}[!htb]
 \centerline{
\begin{tikzpicture}[xscale=1, yscale=1]
\filldraw[black] (0,4) circle (2pt) node[above] {\small$\xi_3$};
\filldraw[black] (0,0) circle (2pt) node[below] {\small$\xi_1$};
\filldraw[black] (-2,2) circle (2pt) node[left] {\small$\xi_4$};
\filldraw[black] (2,2) circle (2pt) node[right] {\small$\xi_2$};
\path (0,0.3) edge[out=-90,in=90, black,thick, middlearrow={>}{}{}] (0,3.7);
\path (0,4) edge[black,thick, middlearrow={>}{}{}] (-2,2);
\path (2,2) edge[black,thick, middlearrow={>}{}{}] (0,4);
\path (-2,2) edge[black,thick, middlearrow={>}{}{}] (0,0);
\path (0,0) edge[black,thick, middlearrow={>}{}{}] (2,2);
\end{tikzpicture}
}
 \caption{The graph of the network in Example 1 of \cite{PodCasLab2020} is not a tournament (there is no edge between the vertices $\xi_2$ and $\xi_4$) even though the network is complete: only at $\xi_1$ is there a 2-dimensional unstable manifold which is captured by the $\Delta$-clique $\Delta_{123}$.\label{fig-complete-no-tournament}}
 \end{figure}
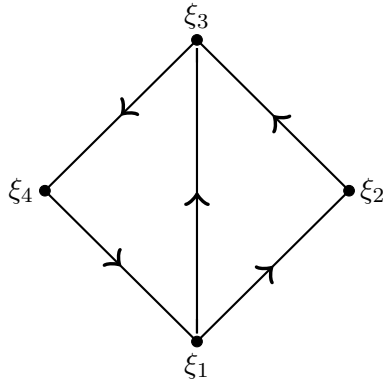

We say that a network is {\em quasi-simple} if all its cycles are quasi-simple. The definition of a quasi-simple cycle is as follows.

\begin{definition}[Definition 2.1 in \cite{GarCas2019}]\label{def:quasi-simple}
A {\em quasi-simple} cycle is a robust heteroclinic cycle connecting $m < \infty$ equilibria $\xi_j \in P_j\cap P_{j-1}$ so that $[\xi_j \rightarrow \xi_{j+1}] \subset P_j$ is one-dimensional, $P_j$ is flow-invariant, and $\mbox{dim }P_j=\mbox{dim }P_{j+1}$ for all $j=1, \hdots,m$.
\end{definition}

Quasi-simple cycles extend the notion of simple cycles introduced by \cite{KruMel1995} and preserve the use of their notation. In simple cycles, the connections lie in coordinate planes, hence the use of the letter $P$; 
the equilibria are on one-dimensional axes, hence the use of $L$ for line. Quasi-simple cycles exist with equilibria on the vertices of a product of simplices (an example is given in \cite{GarCas2019}) and the connections exist in affine space. To include this we define $\hat{L}_j$ to be the space connecting the equilibrium $\xi_j$ to the origin. In case of simple cycles $\hat{L}_j$ coincides with $L_j=P_{j-1} \cap P_j$. When a $\Delta$-clique is present, that is, when there are two-dimensional as well as one-dimensional connections, we consider the quasi-simple cycle whose trajectories are contained in the space of lowest possible dimension.

The eigenvalues of the linearisation of the right hand side of the system at the equilibria are important. As in \cite{GarCas2019}, we group them into four types, according to the location of the corresponding eigenvectors:
\begin{itemize}
	\item  {\em radial} eigenvalues $(-r<0)$, which have eigenvectors in $\hat{L}_j$;
	\item  {\em contracting} eigenvalues $(-c<0)$, which have eigenvectors in 
	$P_{j-1}$ but not in $\hat{L}_j$;
	\item  {\em expanding} eigenvalues $(e>0)$, which have eigenvectors in 
	$P_{j}$ but not in $\hat{L}_j$;
	\item  {\em transverse} eigenvalues $(t\in\R)$, otherwise.
\end{itemize}
We often use indices to indicate the equilibrium and connection they refer to. We use, for instance, $-c_{j}$ to denote a contracting eigenvalue at $\xi_j$ and $e_{j}$ to denote an expanding eigenvalue at $\xi_j$. Since the connections in a quasi-simple network are one-dimensional there is only one negative contracting and one positive expanding eigenvalue at each equilibrium. There may be more than one transverse eigenvalue which we denote by $t_{j,1}, \hdots, t_{j,n_t}$ where $n_t$ is the number of transverse eigenvalues.

Stability is an important and interesting issue when studying dynamics, in particular, dynamics near a heteroclinic cycle or network. The first well-known observation is that a heteroclinic cycle in a network cannot be asymptotically stable, see~\cite[Theorem 3.1]{PodCasLab2019}. That is, it cannot attract all nearby initial conditions. This is because the distribution node belongs to at least two cycles and therefore, there are initial conditions near the distribution node that are taken to (at least) two different cycles. The strongest notion of stability that applies is that of {\em essential asymptotical stability}, e.a.s.\ for short. Roughly speaking, an invariant object is e.a.s.\ if it attracts a set of points of almost full measure in a sufficiently small neighbourhood. See Melbourne \cite{Mel1991} for the original definition and Brannath \cite{Bra1994} for the current reinterpretation.  A weaker notion of stability, which includes e.a.s., is that of {\em fragmentary asymptotic stability}, f.a.s.\ for short, introduced by Podvigina \cite{Pod2012}. 
An invariant object is f.a.s.\ if it attracts a set of points of positive measure in a sufficiently small neighbourhood. 

The construction method known as the ``simplex method'' introduced in \cite{AshPos2013} uses a polynomial vector field, given by equations (2) in \cite{AshPos2013}, such that all equilibria in the network lie on the coordinate axes. The plane containing two consecutive equilibria in a heteroclinic cycle contains a trajectory in the heteroclinic connection between the equilibria. These trajectories together with the equilibria they connect form a quasi-simple cycle. Therefore, the simplex method is especially suitable when addressing our question (Q3).

\section{Minimal complete realisations}\label{sec:completions}
In this section we address (Q1) and (Q2) from the introduction.

In what follows all constructions can be achieved by the simplex method. In particular, all transverse eigenvalues can (and must!) be chosen as negative. This produces networks for which asymptotic stability can be obtained with the results in \cite{PodCasLab2020}.

In this section we consider only digraphs $G=(V,A)$ consisting of two cycles. 
Let $k$ and $\ell$ denote the number of vertices of each cycle.
We can compare the length of the cycles by counting the number of vertices (or edges). If $k \leq \ell$, we say that the $k$-cycle is the shorter cycle, and the $\ell$-cycle is the longer cycle.

The simplex method can be used only if the number of vertices is greater than or equal to three. Hence, throughout this section we assume $k,\ell \geq 3$. A digraph consisting of two cycles always has a unique distribution vertex, which may be distinct from or coincide with the unique collection vertex. If they are distinct, then there is a common edge (or a sequence of common edges). For example, this is the case for the graph of the Kirk-Silber network in \cite{KirSil1994} where $k = \ell =3$, and for that of the House network in \cite{CasLoh2016a} where $k = 3$ and $\ell =4$, see Figure~\ref{fig-KS-Bowtie-House} (left/right, respectively). If collection and distribution vertices coincide, the network has only one common vertex. An example for this case is the Bowtie network in \cite{AshPos2013}, see Figure~\ref{fig-KS-Bowtie-House} (middle).

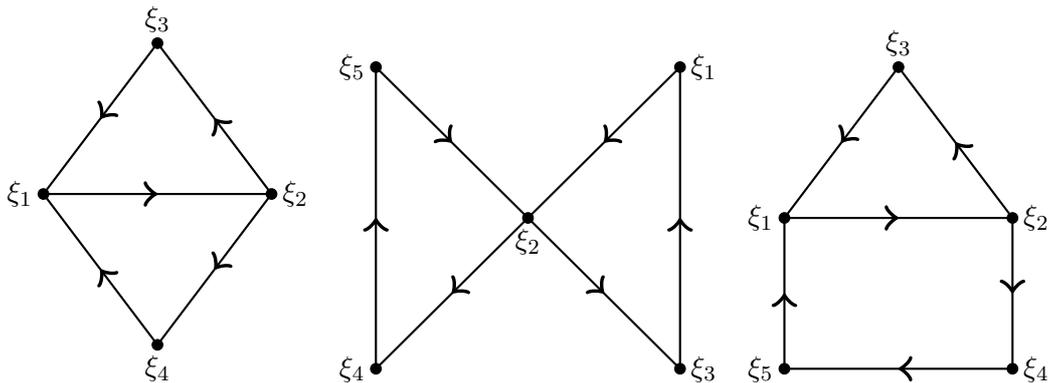
\begin{figure}[!htb]
 \centerline{
\begin{tikzpicture}[xscale=1, yscale=1]
\filldraw[black] (0,2) circle (2pt) node[above] {\small$\xi_3$};
\filldraw[black] (-1.5,0) circle (2pt) node[left] {\small$\xi_1$};
\filldraw[black] (1.5,0) circle (2pt) node[right] {\small$\xi_2$};
\filldraw[black] (0,-2) circle (2pt) node[below] {\small$\xi_4$};
\path (1.5,0) edge[black,thick, middlearrow={>}{}{}] (0,2);
\path (0,2) edge[black,thick, middlearrow={>}{}{}] (-1.5,0);
\path (-1.5,0) edge[black,thick, middlearrow={>}{}{}] (1.5,0);
\path (1.5,0) edge[black,thick, middlearrow={>}{}{}] (0,-2);
\path (0,-2) edge[black,thick, middlearrow={>}{}{}] (-1.5,0);
\end{tikzpicture}
\begin{tikzpicture}[xscale=1, yscale=1]
\filldraw[black] (2,-2) circle (2pt) node[right] {\small$\xi_3$};
\filldraw[black] (2,2) circle (2pt) node[right] {\small$\xi_1$};
\filldraw[black] (0,0) circle (2pt) node[below] {\small$\xi_2$};
\filldraw[black] (-2,-2) circle (2pt) node[left] {\small$\xi_4$};
\filldraw[black] (-2,2) circle (2pt) node[left] {\small$\xi_5$};
\path (0,0) edge[black,thick, middlearrow={>}{}{}] (2,-2);
\path (2,-2) edge[black,thick, middlearrow={>}{}{}] (2,2);
\path (2,2) edge[black,thick, middlearrow={>}{}{}] (0,0);
\path (0,0) edge[black,thick, middlearrow={>}{}{}] (-2,-2);
\path (-2,-2) edge[black,thick, middlearrow={>}{}{}] (-2,2);
\path (-2,2) edge[black,thick, middlearrow={>}{}{}] (0,0);
\end{tikzpicture}
\begin{tikzpicture}[xscale=1, yscale=1]
\filldraw[black] (0,2) circle (2pt) node[above] {\small$\xi_3$};
\filldraw[black] (-1.5,0) circle (2pt) node[left] {\small$\xi_1$};
\filldraw[black] (1.5,0) circle (2pt) node[right] {\small$\xi_2$};
\filldraw[black] (1.5,-2) circle (2pt) node[right] {\small$\xi_4$};
\filldraw[black] (-1.5,-2) circle (2pt) node[left] {\small$\xi_5$};
\path (1.5,0) edge[black,thick, middlearrow={>}{}{}] (0,2);
\path (0,2) edge[black,thick, middlearrow={>}{}{}] (-1.5,0);
\path (-1.5,0) edge[black,thick, middlearrow={>}{}{}] (1.5,0);
\path (1.5,0) edge[black,thick, middlearrow={>}{}{}] (1.5,-2);
\path (1.5,-2) edge[black,thick, middlearrow={>}{}{}] (-1.5,-2);
\path (-1.5,-2) edge[black,thick, middlearrow={>}{}{}] (-1.5,0);
\end{tikzpicture}
}
 \caption{Graphs for the Kirk-Silber network (left), the Bowtie (middle), and the House (right).\label{fig-KS-Bowtie-House}}
 \end{figure}

\begin{proposition}\label{prop:minimal}
Let $G=(V,A)$ be a digraph consisting of two cycles.
Let $k$ and $\ell$ denote the number of vertices of each cycle with $3\leq k \leq \ell$ and $\ell>3$.
Suppose there is no more than one common edge.
Then the minimal number of edges that have to be added to obtain a complete realisation of $G$ is $k-1$.
\end{proposition}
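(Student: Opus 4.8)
The plan is to prove the two bounds separately: a construction showing that $k-1$ added edges suffice, and a structural argument showing that fewer never do. Label the two arcs joining the unique distribution vertex $d$ to the unique collection vertex $c$ as
$d=s_0\to s_1\to\cdots\to s_{k-1}=c$ (the short arc, $k-1$ edges) and
$d=t_0\to t_1\to\cdots\to t_{\ell-1}=c$ (the long arc, $\ell-1$ edges); by the single--common--edge hypothesis these arcs are disjoint apart from $d$ and $c$, and since $k,\ell\geq 3$ the interior vertices $s_1,t_1$ are well defined and distinct.

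For the upper bound I would add exactly the $k-1$ edges $[s_i\to t_1]$ for $i=1,\dots,k-1$. Then every $s_i$ acquires out-degree two while every $t_j$ keeps out-degree one, so the distribution nodes are precisely $s_0,\dots,s_{k-1}$. I claim each is the b-point of a $\Delta$-clique: for $0\le i\le k-2$ the triangle $\{s_i,s_{i+1},t_1\}$ is captured by the edge $[s_{i+1}\to t_1]$, while $s_{k-1}=c$, whose targets are $t_0=d$ and $t_1$, is captured by the \emph{original} edge $[t_0\to t_1]$. Each such triangle is easily seen not to be strongly connected, so it is a genuine $\Delta$-clique, and by the correspondence between $\Delta$-cliques in $G'$ and bounded two-dimensional unstable manifolds in $X$ every distribution node is captured; hence $X$ is complete and realises $G\subset G'$ with $k-1$ added edges.

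For the lower bound I would first note that a complete realisation must have all out-degrees at most two, since an out-degree $\geq 3$ produces an unstable manifold of dimension $\geq 3$, which the simplex method does not capture (see Section~\ref{sec:further} and Figure~\ref{fig-3d-unstable-manifold}). Writing $a$ for the number of added edges, a degree count then yields exactly $a+1$ distribution vertices, each requiring its own capturing edge. Since $d$'s two targets $s_1,t_1$ lie on different arcs with no original edge between them, $d$ must be captured by an added edge, and this initiates a \emph{capture cascade} $d=v_0,v_1,v_2,\dots$: the capturing edge of $v_i$ is an added edge whose source $v_{i+1}$ becomes the next distribution vertex, and the cascade terminates only at some $v_m$ captured by an original edge. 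As $d$ never receives an added out-edge it is never revisited, so the cascade consumes $m$ distinct added edges and $a\geq m$.

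The crux, and the step I expect to be the main obstacle, is to show $m\geq k-1$, i.e.\ that the cascade cannot terminate early. The point is that at each step the two targets of $v_i$ are its original successor and a ``partner'' vertex sitting on the opposite arc, so the original edge that would terminate the cascade (an original edge between these two targets) cannot exist while the targets lie on distinct arcs; because the arcs meet only at $d$ and $c$, a terminating original edge becomes available only once the cascade has advanced to $c$. Here the hypothesis $\ell>3$ is essential: it rules out an early ``return'' edge such as $[t_1\to c]$, which is exactly the shortcut that makes the symmetric case $k=\ell=3$ cheaper and is therefore excluded. I would complete the argument by tracking the advance of the cascade along the two arcs and showing that repeated partner-swaps cannot shorten it, so that reaching a terminating configuration forces traversal of the entire short arc; this gives $m\geq k-1$, hence $a\geq k-1$, matching the construction.
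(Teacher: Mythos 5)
Your upper bound is exactly the paper's construction: all $k-1$ added edges point to the first vertex after the distribution vertex on the longer cycle ($t_1$ in your notation, $v_{b1}$ in Figure~\ref{fig:prop-minimal}), each new edge creating the $\Delta$-clique that captures the previously created distribution vertex, and the final vertex is captured by the original edge $[d \to t_1]$. This half is correct, up to a labelling slip in the case $m=0$: there the two cycles meet in a single vertex $c=d$, so your parametrisation $s_{k-1}=c$ with a short arc of $k-1$ edges is off by one (the short cycle then has $k$ edges); the construction itself still goes through, with the terminal clique $\{s_{k-1},d,t_1\}$ closed by the original edge $[d\to t_1]$.

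The genuine gap is the one you flag yourself: the inequality $m\geq k-1$ is asserted (``repeated partner-swaps cannot shorten it'') but never proved, and this is the entire content of the lower bound --- everything before it (out-degree at most two, $a+1$ distribution vertices, the forced added capture edge at $d$) is routine. The step can be completed inside your framework: encode the cascade state as the pending pair of out-neighbours of the current distribution vertex; each added edge replaces one component $x$ of the pair by its original successor $\sigma(x)$, so the pair advances monotonically along the arcs (this monotonicity, not your ``$d$ is never revisited'' remark, is what gives distinctness of the cascade edges and termination). Then enumerate the pairs admitting an original capturing edge and their costs from the start pair $(s_1,t_1)$: for $m=1$ these are $(d,t_1)$ at cost $k-1$, $(s_1,d)$ at cost $\ell-1$, $(s_{k-2},c)$ and $(c,t_{\ell-2})$ at cost $k+\ell-5$, and pairs through $d$ at cost at least $k+\ell-3$; the case $m=0$ is analogous. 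This is precisely where $\ell>3$ enters: not by ``ruling out'' the swap route, as you suggest, but by forcing $k+\ell-5\geq k-1$, with equality at $\ell=4$ --- for the House network ($k=3$, $\ell=4$, $m=1$) the swap route is a second minimal completion (case (a) versus case (b) in Appendix~\ref{app:house}), so any argument that swaps are \emph{strictly} worse would be false; they can tie but never win. Two further soft spots: ``each requiring its own capturing edge'' is not literally true (two distribution vertices can share an out-neighbour pair through the collection vertex), though your count never uses it; and your premise that a complete realisation forbids out-degree three is the paper's implicit working assumption rather than something it proves --- Section~\ref{sec:further} only states that a three-dimensional unstable manifold is not \emph{guaranteed} to be captured. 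For calibration: the paper's own minimality argument is the informal version of your cascade (the uncaptured distribution vertex is ``pushed along the shorter cycle'', and other orientations are said to cost more), so your approach is the intended one and, completed as above, would in fact be more rigorous than the published proof.
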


\begin{proof}
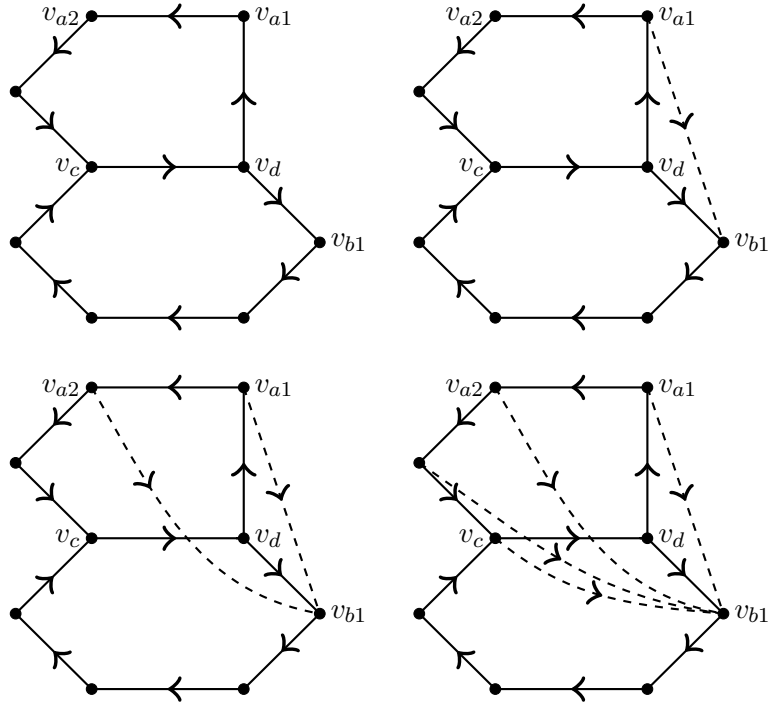
\begin{figure}[!htb]
\centerline{
\begin{tikzpicture}[xscale=1, yscale=1]
\filldraw[black] (0,0) circle (2pt) node[left] {\small$v_c$};
\filldraw[black] (2,0) circle (2pt) node[right] {\small$v_d$};
\filldraw[black] (2,2) circle (2pt) node[right] {\small$v_{a1}$};
\filldraw[black] (0,2) circle (2pt) node[left] {\small$v_{a2}$};
\filldraw[black] (-1,1) circle (2pt);
\filldraw[black] (-1,-1) circle (2pt);
\filldraw[black] (0,-2) circle (2pt);
\filldraw[black] (2,-2) circle (2pt);
\filldraw[black] (3,-1) circle (2pt) node[right] {\small$v_{b1}$};
\path (0,0) edge[out=0,in=0, black,thick, middlearrow={>}{}{}] (1.9,0);
\path (2,0) edge[out=90,in=-90, black,thick, middlearrow={>}{}{}] (2,2);
\path (2,2) edge[out=180,in=0, black,thick, middlearrow={>}{}{}] (0,2);
\path (0,2) edge[out=225,in=45, black,thick, middlearrow={>}{}{}] (-1,1);
\path (-1,1) edge[out=-45,in=135, black,thick, middlearrow={>}{}{}] (0,0);
\path (2,0) edge[out=-45,in=135, black,thick, middlearrow={>}{}{}] (3,-1);
\path (3,-1) edge[out=225,in=45, black,thick, middlearrow={>}{}{}] (2,-2);
\path (2,-2) edge[out=180,in=0, black,thick, middlearrow={>}{}{}] (0,-2);
\path (0,-2) edge[out=135,in=-45, black,thick, middlearrow={>}{}{}] (-1,-1);
\path (-1,-1) edge[out=225,in=45, black,thick, middlearrow={>}{}{}] (0,0);
\end{tikzpicture}
\begin{tikzpicture}[xscale=1, yscale=1]
\filldraw[black] (0,0) circle (2pt) node[left] {\small$v_c$};
\filldraw[black] (2,0) circle (2pt) node[right] {\small$v_d$};
\filldraw[black] (2,2) circle (2pt) node[right] {\small$v_{a1}$};
\filldraw[black] (0,2) circle (2pt) node[left] {\small$v_{a2}$};
\filldraw[black] (-1,1) circle (2pt);
\filldraw[black] (-1,-1) circle (2pt);
\filldraw[black] (0,-2) circle (2pt);
\filldraw[black] (2,-2) circle (2pt);
\filldraw[black] (3,-1) circle (2pt) node[right] {\small$v_{b1}$};
\path (0,0) edge[out=0,in=0, black,thick, middlearrow={>}{}{}] (1.9,0);
\path (2,0) edge[out=90,in=-90, black,thick, middlearrow={>}{}{}] (2,2);
\path (2,2) edge[out=180,in=0, black,thick, middlearrow={>}{}{}] (0,2);
\path (0,2) edge[out=225,in=45, black,thick, middlearrow={>}{}{}] (-1,1);
\path (-1,1) edge[out=-45,in=135, black,thick, middlearrow={>}{}{}] (0,0);
\path (2,0) edge[out=-45,in=135, black,thick, middlearrow={>}{}{}] (3,-1);
\path (3,-1) edge[out=225,in=45, black,thick, middlearrow={>}{}{}] (2,-2);
\path (2,-2) edge[out=180,in=0, black,thick, middlearrow={>}{}{}] (0,-2);
\path (0,-2) edge[out=135,in=-45, black,thick, middlearrow={>}{}{}] (-1,-1);
\path (-1,-1) edge[out=225,in=45, black,thick, middlearrow={>}{}{}] (0,0);
\path (2,2) edge[black,thick, dashed, middlearrow={>}{}{}] (3,-1);
\end{tikzpicture}
}
\vspace{.5cm}
\centerline{
\begin{tikzpicture}[xscale=1, yscale=1]
\filldraw[black] (0,0) circle (2pt) node[left] {\small$v_c$};
\filldraw[black] (2,0) circle (2pt) node[right] {\small$v_d$};
\filldraw[black] (2,2) circle (2pt) node[right] {\small$v_{a1}$};
\filldraw[black] (0,2) circle (2pt) node[left] {\small$v_{a2}$};
\filldraw[black] (-1,1) circle (2pt);
\filldraw[black] (-1,-1) circle (2pt);
\filldraw[black] (0,-2) circle (2pt);
\filldraw[black] (2,-2) circle (2pt);
\filldraw[black] (3,-1) circle (2pt) node[right] {\small$v_{b1}$};
\path (0,0) edge[out=0,in=0, black,thick, middlearrow={>}{}{}] (1.9,0);
\path (2,0) edge[out=90,in=-90, black,thick, middlearrow={>}{}{}] (2,2);
\path (2,2) edge[out=180,in=0, black,thick, middlearrow={>}{}{}] (0,2);
\path (0,2) edge[out=225,in=45, black,thick, middlearrow={>}{}{}] (-1,1);
\path (-1,1) edge[out=-45,in=135, black,thick, middlearrow={>}{}{}] (0,0);
\path (2,0) edge[out=-45,in=135, black,thick, middlearrow={>}{}{}] (3,-1);
\path (3,-1) edge[out=225,in=45, black,thick, middlearrow={>}{}{}] (2,-2);
\path (2,-2) edge[out=180,in=0, black,thick, middlearrow={>}{}{}] (0,-2);
\path (0,-2) edge[out=135,in=-45, black,thick, middlearrow={>}{}{}] (-1,-1);
\path (-1,-1) edge[out=225,in=45, black,thick, middlearrow={>}{}{}] (0,0);
\path (2,2) edge[black,thick, dashed, middlearrow={>}{}{}] (3,-1);
\begin{scope}[very thick,decoration={markings, mark=at position 0.35 with {\arrow[scale=2]{>}}}] \path (0,2) edge[out=-60,in=170, black,thick, dashed, postaction={decorate}] (3,-1);
\end{scope}
\end{tikzpicture}
\begin{tikzpicture}[xscale=1, yscale=1]
\filldraw[black] (0,0) circle (2pt) node[left] {\small$v_c$};
\filldraw[black] (2,0) circle (2pt) node[right] {\small$v_d$};
\filldraw[black] (2,2) circle (2pt) node[right] {\small$v_{a1}$};
\filldraw[black] (0,2) circle (2pt) node[left] {\small$v_{a2}$};
\filldraw[black] (-1,1) circle (2pt);
\filldraw[black] (-1,-1) circle (2pt);
\filldraw[black] (0,-2) circle (2pt);
\filldraw[black] (2,-2) circle (2pt);
\filldraw[black] (3,-1) circle (2pt) node[right] {\small$v_{b1}$};
\path (0,0) edge[out=0,in=0, black,thick, middlearrow={>}{}{}] (1.9,0);
\path (2,0) edge[out=90,in=-90, black,thick, middlearrow={>}{}{}] (2,2);
\path (2,2) edge[out=180,in=0, black,thick, middlearrow={>}{}{}] (0,2);
\path (0,2) edge[out=225,in=45, black,thick, middlearrow={>}{}{}] (-1,1);
\path (-1,1) edge[out=-45,in=135, black,thick, middlearrow={>}{}{}] (0,0);
\path (2,0) edge[out=-45,in=135, black,thick, middlearrow={>}{}{}] (3,-1);
\path (3,-1) edge[out=225,in=45, black,thick, middlearrow={>}{}{}] (2,-2);
\path (2,-2) edge[out=180,in=0, black,thick, middlearrow={>}{}{}] (0,-2);
\path (0,-2) edge[out=135,in=-45, black,thick, middlearrow={>}{}{}] (-1,-1);
\path (-1,-1) edge[out=225,in=45, black,thick, middlearrow={>}{}{}] (0,0);
\path (2,2) edge[black,thick, dashed, middlearrow={>}{}{}] (3,-1);
\begin{scope}[very thick,decoration={markings, mark=at position 0.35 with {\arrow[scale=2]{>}}}] \path (0,2) edge[out=-60,in=170, black,thick, dashed, postaction={decorate}] (3,-1);
\end{scope}
\path (-1,1) edge[out=-35,in=173, black,thick, dashed, middlearrow={>}{}{}] (3,-1);
\path (0,0) edge[out=-40,in=175, black,thick, dashed, middlearrow={>}{}{}] (3,-1);
\end{tikzpicture}
}
 \caption{For the graph at the top left the simplex method produces and incomplete network since $v_d$ has out-degree 2. An edge can be added connecting $v_{a1}$ to $v_{b1}$ in order to create a $\Delta$-clique to contain the unstable manifold of the distribution node corresponding to $v_d$ (top right). The problem of having out-degree two without a $\Delta$-clique repeats at $v_{a1}$ and can be solved by the addition of an edge $[v_{a2} \to v_{b1}]$ (bottom left). The graph corresponding to the complete network appears at the bottom right.\label{fig:prop-minimal}}
\end{figure}
 
The proof is constructive and we start by containing the 2-dimensional unstable manifold of the distribution node. In order to produce a $\Delta$-clique that will contain this unstable manifold, we add an edge connecting the two vertices, one on each cycle, immediately after the distribution vertex. Orient this edge from the shorter to the longer cycle. In Figure~\ref{fig:prop-minimal}, this is the connection $[v_{a1} \to v_{b1}]$, forcing a new positive eigenvalue at the equilibrium corresponding to $v_{a1}$, when the simplex method is applied. 

This makes $v_{a1}$ into a distribution vertex which corresponds to an equilibrium with a 2-dimensional unstable manifold, not contained in the network. We repeat the previous step at $v_{a1}$ and create a $\Delta$-clique by adding an edge connecting the vertex after $v_{a1}$ and the same vertex in the longer cycle, $v_{b1}$. Again direct this edge from the shorter to the longer cycle. This is $[v_{a2} \to v_{b1}]$ in Figure~\ref{fig:prop-minimal}.

This process finishes when all vertices in the shorter cycle are connected to $v_{b1}$ which amounts to the addition of $k-1$ edges, all directed towards the same vertex, $v_{b1}$. In Figure~\ref{fig:prop-minimal} this is when the vertices $v_c$, $v_d$ and $v_{b1}$ form a $\Delta$-clique.

Now every distribution vertex in the extended graph has out-degree 2 and is the b-point of a $\Delta$-clique, which captures the entire unstable manifold of the corresponding equilibrium. Thus, the network created through the simplex method is complete.

Minimality is guaranteed by adding edges from the shorter to the longer cycle. As can be seen in Figure~\ref{fig:prop-minimal} this choice of orientation pushes the location of the distribution vertex that is not yet a b-point of a $\Delta$-clique along the shorter cycle. The process finishes once a distribution vertex is created that connects to the original one. In Figure~\ref{fig:prop-minimal} this happens when $v_c$ becomes a distribution vertex. Choosing different orientation of the added edges in the process increases the number of additional edges.
\end{proof}

In order to apply the simplex method we need cycles with at least three equilibria. In the result above we have restricted to $\ell>3$, so we now briefly comment on what happens for $k=\ell=3$, denoting by $m$ the number of common edges between the two cycles:
\begin{itemize}
	\item If $m=0$, then we have the graph of the Bowtie network~\cite{CasLoh2016a} and the minimal number of new edges required for a complete realisation is $2=k-1=\ell-1$ (just as in Proposition~\ref{prop:minimal}).
	\item If $m=1$, then we have the graph of the Kirk and Silber network~\cite{KirSil1994} and the minimal number of new edges required for a complete realisation is $1=k-2=\ell-2$.
	\item The case $m=2$ is not possible, because it would require the distribution vertex to be connected to the collection vertex by two separate edges, which contradicts our general assumptions on the graphs and networks we consider. 
\end{itemize}

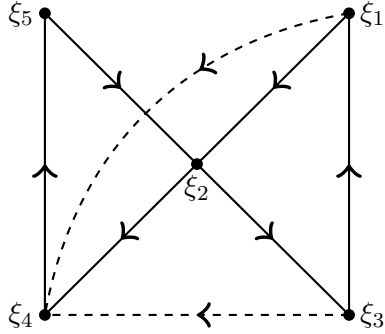
\begin{figure}[!htb]
 \centerline{
\begin{tikzpicture}[xscale=1, yscale=1]
\filldraw[black] (2,-2) circle (2pt) node[right] {\small$\xi_3$};
\filldraw[black] (2,2) circle (2pt) node[right] {\small$\xi_1$};
\filldraw[black] (0,0) circle (2pt) node[below] {\small$\xi_2$};
\filldraw[black] (-2,-2) circle (2pt) node[left] {\small$\xi_4$};
\filldraw[black] (-2,2) circle (2pt) node[left] {\small$\xi_5$};
\path (0,0) edge[black,thick, middlearrow={>}{}{}] (2,-2);
\path (2,-2) edge[black,thick, middlearrow={>}{}{}] (2,2);
\path (2,2) edge[black,thick, middlearrow={>}{}{}] (0,0);
\path (0,0) edge[black,thick, middlearrow={>}{}{}] (-2,-2);
\path (-2,-2) edge[black,thick, middlearrow={>}{}{}] (-2,2);
\path (-2,2) edge[black,thick, middlearrow={>}{}{}] (0,0);
\path (2,-2) edge[black,thick, dashed, middlearrow={>}{}{}] (-2,-2);
\begin{scope}[very thick,decoration={markings, mark=at position 0.35 with {\arrow[scale=2]{>}}}] 
\path (2,2) edge[out=190,in=80, black,thick, dashed, postaction={decorate}] (-2,-2);
\end{scope}
\end{tikzpicture}
}
 \caption{Graph for the complete realisation of the Bowtie network. The added edges are indicated by dashed lines. Adding $[\xi_3 \rightarrow \xi_4]$ to contain $W^u(\xi_2)$ creates a 2-dimensional unstable manifold at $\xi_3$. This can be contained by adding the edge $[\xi_1 \rightarrow \xi_4]$. Note that the opposite orientation for this edge leads to a 2-dimensional unstable manifold at $\xi_4$ leading to a non-minimal complete realisation.\label{fig-Bowtie-completion}}
 \end{figure}
 
 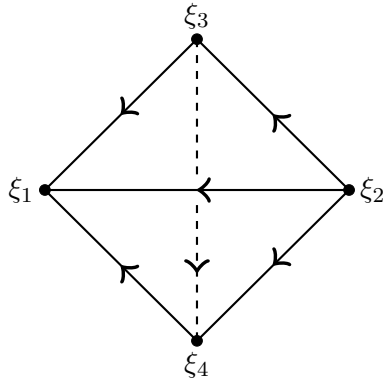
\begin{figure}[!htb]
 \centerline{
\begin{tikzpicture}[xscale=1, yscale=1]
\filldraw[black] (0,2) circle (2pt) node[above] {\small$\xi_3$};
\filldraw[black] (-2,0) circle (2pt) node[left] {\small$\xi_1$};
\filldraw[black] (2,0) circle (2pt) node[right] {\small$\xi_2$};
\filldraw[black] (0,-2) circle (2pt) node[below] {\small$\xi_4$};
\path (2,0) edge[black,thick, middlearrow={>}{}{}] (0,2);
\path (0,2) edge[black,thick, middlearrow={>}{}{}] (-2,0);
\path (-2,0) edge[black,thick, middlearrow={>}{}{}] (2,0);
\path (2,0) edge[black,thick, middlearrow={>}{}{}] (0,-2);
\path (0,-2) edge[black,thick, middlearrow={>}{}{}] (-2,0);
\path (0,2) edge[black,thick, dashed] (0,0.2);
\path (0,-0.2) edge[black,thick, dashed, middlearrow={>}{}{}] (0,-2);
\end{tikzpicture}
}
 \caption{Graph for the complete realisation of the Kirk-Silber network. The added edge is indicated by dashed lines. Adding $[\xi_3 \rightarrow \xi_4]$ to contain $W^u(\xi_2)$ completes the network. The orientation of this edge is irrelevant.\label{fig-KS-completion}}
 \end{figure}

It is worthwhile noting that the same graph can be realised as a complete network by adding edges from the longer to the shorter cycle in exactly the same way as in the proof above. If $k=3$ and there is one common edge, this process stops when reaching the vertex before the collection vertex after the addition of $\ell -2$ edges. Otherwise, it stops at the vertex before the distribution vertex (which coincides with the collection vertex if there is one common edge), resulting in $\ell-1$ new edges. Although this produces a complete network, the number of edges is usually not minimal.

\begin{proposition}\label{prop:more-common}
Let $G=(V,A)$ be a digraph consisting of two cycles.
Let $k$ and $\ell$ denote the number of vertices of each cycle with $3 \leq k \leq \ell$.
Suppose there are $m>1$ common edges.
Then the minimal number of edges that have to be added to obtain a complete realisation of $G$ is given by
\begin{enumerate}[(i)]
	\item $k-1$ if $k>m+2$,
	\item $\min(\ell-(m+1),k-1)$ if $k=m+2$,
	\item $\min(\ell-(m+2),k-1)$ if $k=m+1$.
\end{enumerate}
\end{proposition}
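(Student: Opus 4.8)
The plan is to analyse every completion as a \emph{resolution process}: starting from the unique distribution vertex, one adds edges one at a time, each turning the current distribution vertex into the b-point of a $\Delta$-clique and (generically) spawning one new distribution vertex, until the process closes up on itself. First I fix notation for the shared structure. Write the $m$ common edges as a directed path $v_0 \to v_1 \to \cdots \to v_m$, so that $v_m$ is the unique distribution vertex and $v_0$ the unique collection vertex. The shorter cycle closes through a private path $v_m \to w_1 \to \cdots \to w_p \to v_0$ with $p = k-(m+1)$, and the longer cycle through $v_m \to u_1 \to \cdots \to u_q \to v_0$ with $q = \ell-(m+1)$ (when $p=0$ the short cycle closes directly as $v_m \to v_0$).

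For the upper bounds I run the construction of Proposition~\ref{prop:minimal} in each of the two admissible orientations and locate where the chain closes ``for free'', i.e.\ reaches a distribution vertex whose two out-neighbours are already joined by an edge. Orienting every added edge from the shorter into the longer cycle pushes the unresolved vertex once around the shorter cycle, from $v_m$ through $w_1,\dots,w_p,v_0,v_1,\dots$, terminating at $v_{m-1}$ via the pre-existing branch edge $v_m \to u_1$; this costs $k-1$ edges, exactly as in Proposition~\ref{prop:minimal}. Orienting instead from the longer into the shorter cycle pushes the vertex along $u_1,u_2,\dots$, and the decisive question is where \emph{this} chain first admits a free closure. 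If $k=m+1$ the common target is the collection vertex $v_0$, so the chain closes at $u_{q-1}$ using the existing closing edge $u_q \to v_0$, costing $q-1=\ell-(m+2)$ edges; if $k=m+2$ it closes at $u_q$ using the existing closing edge $w_1 \to v_0$, costing $q=\ell-(m+1)$ edges; and if $k>m+2$ no such closure is available until the chain has wrapped all the way around the longer cycle, costing $\ell-1$ edges. Since $\ell-1\geq k-1$, taking in each case the cheaper orientation yields precisely the three claimed values.

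For minimality I must show that no completion does better, and this is where the real work lies. The first reduction is to argue that a minimal completion never raises an out-degree above two: a vertex of out-degree $\geq 3$ has an unstable manifold of dimension $\geq 3$, which cannot be captured by $\Delta$-cliques (cf.\ the discussion preceding Definition~\ref{def:quasi-simple} and the example in Section~\ref{sec:further}), so every added edge must emanate from a current out-degree-one vertex and thereby create exactly one new distribution vertex that must in turn be resolved. Hence any completion \emph{is} such a resolution chain, and the number of added edges equals its length. The main obstacle is to bound the length of an \emph{arbitrary} admissible chain from below, allowing chains that switch back and forth between the two cycles rather than following one of them. I would track the position of the unresolved vertex and show that each added edge advances it by exactly one step along one of the two cyclic arcs, while a free closure becomes available only upon returning to the branch edges at $v_m$ or upon reaching $v_0$ with the appropriate partner already present. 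Establishing that every admissible chain must traverse at least $k-1$, resp.\ $\min(\ell-(m+1),k-1)$, resp.\ $\min(\ell-(m+2),k-1)$ steps before any such closure exists—and, crucially, that interleaving the two orientations can only lengthen the chain—is the heart of the argument, which I expect to carry out by a monotonicity/potential argument on the location of the unresolved distribution vertex relative to the distinguished edges $v_m \to w_1$, $v_m \to u_1$, $w_p \to v_0$ and $u_q \to v_0$.
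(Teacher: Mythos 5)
The constructive half of your proposal is correct and is essentially the paper's own proof. You run the same two orientations of added edges as the paper does, and your termination points and counts match its case analysis exactly: $k-1$ edges when orienting from the shorter into the longer cycle (closing at $v_{m-1}$ via the pre-existing branch edge $[v_m \to u_1]$); and, for the opposite orientation, $\ell-1$ edges when $k>m+2$, $q=\ell-(m+1)$ edges closing at $u_q$ via the existing edge $[w_1\to v_0]$ when $k=m+2$, and $q-1=\ell-(m+2)$ edges closing at $u_{q-1}$ via $[u_q\to v_0]$ when $k=m+1$. Your explicit parametrisation $p=k-(m+1)$, $q=\ell-(m+1)$ is a cleaner bookkeeping than the paper's verbal description, and your formulas correctly subsume the degenerate case $m=2$, $k=3$, $\ell=4$ (zero added edges), which the paper notes separately.

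The gap is in the minimality half, which you rightly call the heart of the argument but then only announce rather than execute: the reduction of an arbitrary completion to a resolution chain, the one-step-advance claim, and the assertion that interleaving the two orientations only lengthens the chain are all left as a plan (``which I expect to carry out\ldots''). To be fair, the paper's own proof is equally thin here --- it appeals to the mechanism from Proposition~\ref{prop:minimal} (reversing the orientation of an added edge creates a new two-dimensional unstable manifold further along, increasing the count) and dismisses alternating constructions by assertion --- so you have set yourself a higher standard than the paper meets, but your proposal as written does not meet it either. Moreover, one step of your plan is overstated and would fail as formulated: you claim a vertex of out-degree $\geq 3$ ``cannot be captured by $\Delta$-cliques,'' whereas Section~\ref{sec:further} says the opposite for the tournament configuration --- when the three targets of $\xi_d$ are joined as a $\Delta$-clique (Figure~\ref{fig-3d-unstable-manifold}, right), $\xi_3$ becomes a sink of the relevant four-dimensional subspace and the three-dimensional unstable manifold may be entirely contained in the network; what demonstrably fails is the cyclically oriented triangle (left), where an extra equilibrium appears. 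So your exclusion of out-degree-three vertices from a minimal completion cannot rest on impossibility of capture; it would have to be a counting argument (raising a vertex to out-degree three costs the new outgoing edge plus the edges needed to make its three targets pairwise connected with admissible orientations, which is never cheaper than resolving at out-degree two), and neither you nor the paper supplies that argument.
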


\begin{proof}
Note that we always have $k \geq m+1$, so (i)-(iii) cover all cases.

The proof proceeds as the previous one, by adding edges either from the shorter to the longer cycle or vice-versa.  The process ends differently in these two cases.

If we add edges from the shorter to the longer cycle, then reaching the collection vertex does not create the last $\Delta$-clique. The process must be continued along the common edges until the last vertex before the distribution vertex, see Figure~\ref{fig:prop-more-common}, where as before all vertices in the shorter cycle have to be connected to $v_{b1}$. This creates $k-1$ new edges.

If we add edges from the longer to the shorter cycle, there are several possibilities: For (i) note that the process only stops at the vertex before the distribution vertex, just like when we are connecting from short to long, resulting in $\ell-1$ new edges. Since $\ell \geq k$, the minimal number of edges is $k-1$.

For (ii), the process stops one vertex before the collection vertex, because all new edges lead to the vertex before the collection vertex (there is only one vertex in the shorter cycle that does not belong to the longer cycle). This results in $\ell-(m+1)$ new edges.

Finally, for (iii) the process stops two vertices before the collection vertex, because all new edges lead to the collection vertex, resulting in $\ell-(m+2)$ new edges. The case $m=2$, $k=3$, $\ell =4$ is complete without the addition of edges.
\end{proof}

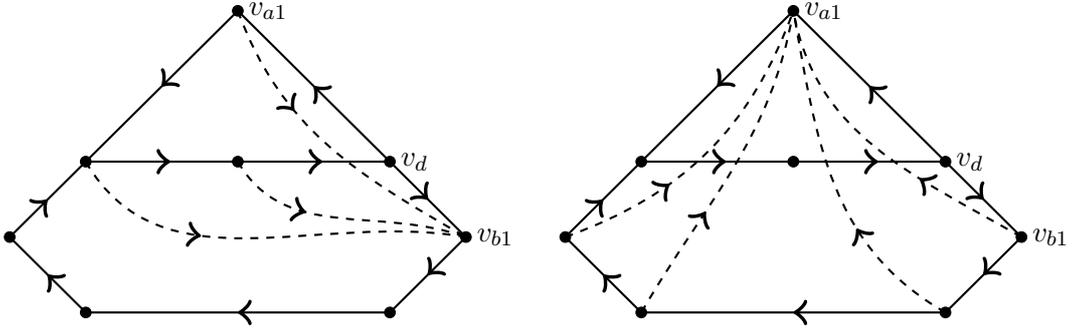
\begin{figure}[!htb]
\centerline{
\begin{tikzpicture}[xscale=1, yscale=1]
\filldraw[black] (0,0) circle (2pt);
\filldraw[black] (2,0) circle (2pt);
\filldraw[black] (4,0) circle (2pt) node[right] {\small$v_d$};
\filldraw[black] (2,2) circle (2pt) node[right] {\small$v_{a1}$};
\filldraw[black] (-1,-1) circle (2pt);
\filldraw[black] (0,-2) circle (2pt);
\filldraw[black] (4,-2) circle (2pt);
\filldraw[black] (5,-1) circle (2pt) node[right] {\small$v_{b1}$};
\path (0,0) edge[out=0,in=0, black,thick, middlearrow={>}{}{}] (1.9,0);
\path (2,0) edge[out=0,in=0, black,thick, middlearrow={>}{}{}] (3.9,0);
\path (4,0) edge[out=135,in=-45, black,thick, middlearrow={>}{}{}] (2,2);
\path (2,2) edge[out=225,in=45, black,thick, middlearrow={>}{}{}] (0,0);
\path (4,0) edge[out=-45,in=135, black,thick, middlearrow={>}{}{}] (5,-1);
\path (5,-1) edge[out=225,in=45, black,thick, middlearrow={>}{}{}] (4,-2);
\path (4,-2) edge[out=180,in=0, black,thick, middlearrow={>}{}{}] (0,-2);
\path (0,-2) edge[out=135,in=-45, black,thick, middlearrow={>}{}{}] (-1,-1);
\path (-1,-1) edge[out=225,in=45, black,thick, middlearrow={>}{}{}] (0,0);
\begin{scope}[very thick,decoration={markings, mark=at position 0.35 with {\arrow[scale=2]{>}}}] 
\path (2,2) edge[out=-70,in=150, black,thick, dashed, postaction={decorate}] (5,-1);
\path (0,0) edge[out=-60,in=170, black,thick, dashed, postaction={decorate}] (5,-1);
\path (2,0) edge[out=-60,in=160, black,thick, dashed, postaction={decorate}] (5,-1);
\end{scope}
\end{tikzpicture}
\begin{tikzpicture}[xscale=1, yscale=1]
\filldraw[black] (0,0) circle (2pt);
\filldraw[black] (2,0) circle (2pt);
\filldraw[black] (4,0) circle (2pt) node[right] {\small$v_d$};
\filldraw[black] (2,2) circle (2pt) node[right] {\small$v_{a1}$};
\filldraw[black] (-1,-1) circle (2pt);
\filldraw[black] (0,-2) circle (2pt);
\filldraw[black] (4,-2) circle (2pt);
\filldraw[black] (5,-1) circle (2pt) node[right] {\small$v_{b1}$};
\path (0,0) edge[out=0,in=0, black,thick, middlearrow={>}{}{}] (1.9,0);
\path (2,0) edge[out=0,in=0, black,thick, middlearrow={>}{}{}] (3.9,0);
\path (4,0) edge[out=135,in=-45, black,thick, middlearrow={>}{}{}] (2,2);
\path (2,2) edge[out=225,in=45, black,thick, middlearrow={>}{}{}] (0,0);
\path (4,0) edge[out=-45,in=135, black,thick, middlearrow={>}{}{}] (5,-1);
\path (5,-1) edge[out=225,in=45, black,thick, middlearrow={>}{}{}] (4,-2);
\path (4,-2) edge[out=180,in=0, black,thick, middlearrow={>}{}{}] (0,-2);
\path (0,-2) edge[out=135,in=-45, black,thick, middlearrow={>}{}{}] (-1,-1);
\path (-1,-1) edge[out=225,in=45, black,thick, middlearrow={>}{}{}] (0,0);
\begin{scope}[very thick,decoration={markings, mark=at position 0.35 with {\arrow[scale=2]{>}}}] 
\path (5,-1) edge[out=150,in=-80, black,thick, dashed, postaction={decorate}] (2,2);
\path (4,-2) edge[out=150,in=-80, black,thick, dashed, postaction={decorate}] (2,2);
\path (0,-2) edge[out=60,in=-100, black,thick, dashed, postaction={decorate}] (2,2);
\path (-1,-1) edge[out=20,in=-110, black,thick, dashed, postaction={decorate}] (2,2);
\end{scope}
\end{tikzpicture}
}
\caption{Both initial graphs (drawn with solid lines) are equal and such that $k=4$, $\ell = 7$, and $m=2$. On the left we construct a graph that can be realised as a complete network by adding edges from the shorter to the longer cycle. We add $k-1=3$ edges. On the right we add $\ell-(m+1)=4$ edges from the longer to the shorter cycle.
 \label{fig:prop-more-common}}
\end{figure}

See Figure~\ref{fig:prop-more-common}. 
Which choice of direction for the new edges yields the smaller number of necessary new edges depends on how long the longer cycle is compared to the number of common edges in both cycles.

\begin{corollary}
The first expression in the minimum in cases (ii) and (iii) above is the smaller one if and only if $\ell \leq 2(m+1)$.
\end{corollary}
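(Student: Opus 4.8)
The plan is to substitute the relation between $k$ and $m$ that defines each case directly into the second term $k-1$ of the minimum, reducing the comparison ``first expression $\leq$ second expression'' to an elementary inequality in $\ell$ and $m$. The point worth highlighting is that, although the two cases start from different expressions for the $\ell$-dependent term ($\ell-(m+1)$ versus $\ell-(m+2)$) and different values of $k$, both collapse to the single threshold $\ell \leq 2(m+1)$.

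First I would treat case (ii), where $k=m+2$. Here the second term of the minimum is $k-1=(m+2)-1=m+1$, while the first term is $\ell-(m+1)$. The first term is the smaller one precisely when
\[
\ell-(m+1) \leq m+1,
\]
which rearranges to $\ell \leq 2(m+1)$, as claimed.

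Next I would treat case (iii), where $k=m+1$. Now the second term is $k-1=(m+1)-1=m$, while the first term is $\ell-(m+2)$. The first term is the smaller one precisely when
\[
\ell-(m+2) \leq m,
\]
i.e.\ $\ell \leq 2m+2 = 2(m+1)$, giving the same condition as in case (ii).

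Since the statement is a purely arithmetic equivalence obtained by substitution, there is no genuine obstacle to overcome; the only thing to verify with care is that the two a priori distinct comparisons indeed yield the \emph{same} bound $\ell \leq 2(m+1)$, which is exactly what the two displays above confirm. (If one prefers strict inequalities to break ties, the boundary value $\ell = 2(m+1)$ makes the two expressions equal, and the direction of the tie-break can be assigned consistently in both cases without affecting the stated characterisation.)
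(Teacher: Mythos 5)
Your proof is correct and is exactly the implicit argument of the paper, which states this corollary without proof precisely because it reduces, by substituting $k=m+2$ and $k=m+1$ respectively, to the two elementary inequalities $\ell-(m+1)\leq m+1$ and $\ell-(m+2)\leq m$, both equivalent to $\ell\leq 2(m+1)$. Your remark on the tie at $\ell=2(m+1)$, where the two expressions coincide, is a sensible clarification consistent with the non-strict inequality in the statement.
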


There are other ways of achieving a complete realisation, namely by choosing edges from one cycle to the other in alternating ways. This is never minimal and falls outside the scope of the present article.

\section{Positive transverse eigenvalues for cycles in complete realisations}\label{sec:stability}
In this section we address (Q3) from the introduction. In contrast to what we have done so far, we now focus on the one-dimensional connections in the coordinate planes, which do not always correspond to the full intersection of the respective invariant manifolds. When restricting to these connections, the simplex method produces quasi-simple heteroclinic cycles whose stability can be studied using results in \cite{GarCas2019} or \cite{Pod2012}. For these results the eigenvalues of the Jacobian matrix at the equilibria in the network are important.

In this section we show how different complete realisations influence the signs of these eigenvalues. This provides enough information to apply the following first steps towards a study of stability:

\begin{itemize}
	\item[(i)] Given a digraph consisting of two cycles as before, choose the cycle for which to minimize the number of positive transverse eigenvalues.
	\item[(ii)] Based on the choice above order the coordinates (which may be thought of as a labeling of the vertices in the graph) in a way that does not affect the stability in the construction, but simplifies the necessary calculations.
	\item[(iii)] Apply the simplex method to create a heteroclinic network using the choices made above.
\end{itemize}

We give a brief overview of how stability can be studied by constructing a Poincar\'e return map to cross sections near each equilibrium. Let $H_j^{\inn}$ and $H_j^{\out}$ be cross sections near $\xi_j$, respectively, along the incoming connection to $\xi_j$ and the outgoing connection to $\xi_j$. Note that it suffices to consider one incoming and one outgoing cross-section per equilibrium, since we are looking at a single cycle, not the whole network. Define the {\em local map} near $\xi_j$ by $\phi_j: H_j^{\inn} \to H_j^{\out}$ obtained by integrating the linearised flow at $\xi_j$. Define also the {\em global map} between $\xi_j$ and $\xi_{j+1}$ by $\psi_j: H_j^{\out} \to H_{j+1}^{\inn}$. Denoting by $n_t$ the number of transverse eigenvalues at each equilibrium, we can restrict the cross sections to an $(n_t+1)$-dimensional space where we can use coordinates $(w,\boldsymbol{z})$ with $w \in \R$, related to the expanding direction, and $\boldsymbol{z} \in \R^{n_t}$, related to the transverse directions. Following also \cite{Pod2012} we use logarithmic coordinates of the form $\boldsymbol{\eta} = (\ln w, \ln z_1, \hdots, \ln z_{n_t})$. In these coordinates the map $g_j \equiv \psi_j \circ \phi_j: H_j^{\inn} \to H_{j+1}^{\inn}$ becomes linear and can be represented by a matrix $\mathcal{M}_{j}\boldsymbol{\eta}=M_{j}\boldsymbol{\eta}+F_{j}$. The Poincar\'e return maps to each cross section are obtained by calculating the products of such matrices. It can be shown that only the products of the matrices $M_j$ matter, see \cite{Pod2012}. Therefore, for our purposes, we are interested in the basic transition matrices $M_j$ given by

\begin{equation}\label{eq:Mj}
M_{j}=A_{j}\left[\begin{array}{ccccc}
b_{j,1} & 0 & 0 & \ldots & 0\\
b_{j,2} & 1 & 0 & \ldots & 0\\
b_{j,3} & 0 & 1 & \ldots & 0\\
. & . & . & \ldots & .\\
b_{j,N} & 0 & 0 & \ldots & 1
\end{array}\right],
\end{equation}
where the entries depend on the eigenvalues at $\xi_j$ as follows
$$
b_{j,1}=\frac{c_{j}}{e_{j}}, \;\; b_{j,s+1}=-\frac{t_{j,s}}{e_{j}}, \;\; s=1,\ldots,n_{t}, \;\; j=1,\ldots,m.
$$
The matrix $A_j$ is a permutation matrix which becomes the identity matrix when the global map is the identity. Otherwise, Proposition 4.1.10 in \cite{Gar2018} implies that $A_j$ has the following block-diagonal form:
\begin{align*}
A_j =  \left( \begin{array}{c|c}
A & 0 \\\hline
0 & I \\
\end{array}\right)
\mbox{\hspace{1cm}}
A =  \left( \begin{array}{ccccc}
0 & 0 & \ldots & 0 & 1 \\
1 & 0 & \ldots & 0 & 0 \\
0 & 1 & \ldots & 0 & 0 \\
\ldots & \ldots & \ldots & \ldots & \ldots \\
0 & 0 & \ldots & 1 & 0 \\
\end{array}\right)
\end{align*}
The dimension of the matrix $A$ is given by the length of the cycle minus 2, and the identity block corresponds to the transverse directions (to the cycle).

Note that we always have $b_{j,1}>0$, but other entries of $M_j$ may be negative when transverse eigenvalues to the cycle are positive. These positive transverse eigenvalues contribute to the instability of the cycle. The permutation matrix $A_j$ can be thought of as spreading the effect of such a positive entry throughout the transition matrix products for the return map. Therefore, it seems impossible to formulate a general statement on stability of cycles in complete realisations. However, some results can be obtained from the graph structure.

In our construction, the number of positive transverse eigenvalues in a cycle is related to the number of added edges. The actual number of positive transverse eigenvalues also depends on the number of common connections as explained in the following subsections. Each cycle has at least one positive transverse eigenvalue, namely at the distribution node.

\subsection{No common connection/edge}
In networks without a common connection, the two cycles only share a single equilibrium. This is both the distribution and collection node. The simplest example of such a network realised by the simplex method is the Bowtie network.

\begin{figure}[!htb]
 \centerline{
\begin{tikzpicture}[xscale=1, yscale=1]
\filldraw[black] (2,-2) circle (2pt) node[right] {\small$\xi_3$};
\filldraw[black] (2,2) circle (2pt) node[right] {\small$\xi_1$};
\filldraw[black] (0,0) circle (2pt) node[below] {\small$\xi_2$};
\filldraw[black] (-2,-2) circle (2pt) node[left] {\small$\xi_4$};
\filldraw[black] (-4,0) circle (2pt) node[left] {\small$\xi_5$};
\filldraw[black] (-2,2) circle (2pt) node[left] {\small$\xi_6$};
\path (0,0) edge[black,thick, middlearrow={>}{}{}] (2,-2);
\path (2,-2) edge[black,thick, middlearrow={>}{}{}] (2,2);
\path (2,2) edge[black,thick, middlearrow={>}{}{}] (0,0);
\path (0,0) edge[black,thick, middlearrow={>}{}{}] (-2,-2);
\path (-2,-2) edge[black,thick, middlearrow={>}{}{}] (-4,0);
\path (-4,0) edge[black,thick, middlearrow={>}{}{}] (-2,2);
\path (-2,2) edge[black,thick, middlearrow={>}{}{}] (0,0);
\path (-2,-2) edge[black,thick, dashed, middlearrow={>}{}{}] (2,-2);
\begin{scope}[very thick,decoration={markings, mark=at position 0.35 with {\arrow[scale=2]{>}}}] 
\path (-4,0) edge[out=-30,in=170, black,thick, dashed, postaction={decorate}] (2,-2);
\path (-2,2) edge[out=-80,in=160, black,thick, dashed, postaction={decorate}] (2,-2);
\end{scope}
\end{tikzpicture}
}
 \caption{Graph for the complete realisation of a network with no common connection. The collection node coincides with the distribution node and is labelled  $\xi_2$. In this network, $k=3$ and $\ell=4$. The added edges are dashed lines and go from the longer to the shorter cycle, $[\xi_1 \rightarrow \xi_2 \rightarrow \xi_3]$, so that this complete realisation is not minimal.
 \label{fig:no-common-connection}}
 \end{figure}
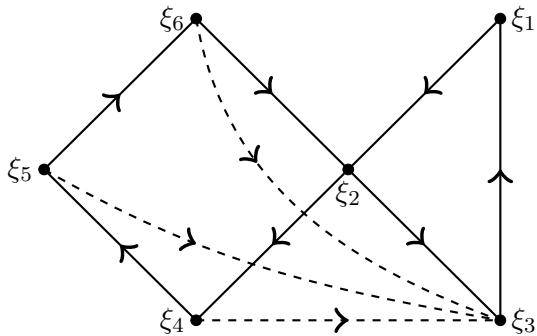

\begin{proposition}\label{prop:stab-no-common-edge}
Consider a digraph consisting of two cycles that have only one common vertex. Then for each cycle there is a complete realisation such that the only positive transverse eigenvalue of this cycle is at the distribution node.
\end{proposition}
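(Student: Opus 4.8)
The plan is to reduce the statement to two pieces of eigenvalue bookkeeping for the simplex method and then to invoke the completion construction from the proof of Proposition~\ref{prop:minimal} with a deliberately chosen orientation. Write $\xi_0$ for the unique common vertex, which here is simultaneously the distribution and the collection node, and let $C$ be the cycle we wish to favour and $C'$ the other one. First I would record how positive transverse eigenvalues arise. In a network built by the simplex method, an outgoing connection $[\xi_i \to \xi_j]$ forces a positive eigenvalue at its \emph{source} $\xi_i$ in the $\xi_j$-direction; if $\xi_i$ lies on a cycle and $\xi_j$ is not the successor of $\xi_i$ along that cycle, this direction is transverse to the cycle and the eigenvalue is a positive transverse one. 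Applying this to the original graph, every vertex of $C$ other than $\xi_0$ has out-degree one along $C$ and so carries no positive transverse eigenvalue, whereas at $\xi_0$ the outgoing connection towards the first vertex of $C'$ points transversely to $C$ and hence yields exactly one positive transverse eigenvalue of $C$. This is precisely the eigenvalue permitted by the statement.

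Next I would complete the network without disturbing this count. Following the proof of Proposition~\ref{prop:minimal}, I would add all the edges needed for completeness so that each one emanates from a vertex of $C'$ and points to the vertex of $C$ immediately after $\xi_0$; this is the shorter-to-longer orientation when $C$ is the longer cycle and the longer-to-shorter orientation when $C$ is the shorter cycle. Marching around $C'$ from the successor of $\xi_0$ and adding at each vertex an edge to this fixed target creates a chain of $\Delta$-cliques with the successive distribution vertices as b-points, exactly as in Proposition~\ref{prop:minimal}; the final $\Delta$-clique is closed by the pre-existing connection from $\xi_0$ to the target. By the discussion preceding that proposition this produces a complete network. When $C$ is the shorter cycle the orientation is the non-minimal one, but since the statement asks only for the existence of a complete realisation, minimality is not needed here.

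Finally I would read off the eigenvalues. Every added edge has its source on $C'$ and its target on $C$; as the forced positive eigenvalue always sits at the source, no vertex of $C$ gains a positive transverse eigenvalue from an added edge, while the edges arriving in $C$ only raise the in-degree of their target and therefore contribute contracting (negative) eigenvalues there. Hence the single positive transverse eigenvalue of $C$ at $\xi_0$ identified in the first step is the only one, and exchanging the roles of $C$ and $C'$ settles the remaining cycle. The step needing the most care is the first one: confirming that the simplex method really places the forced positive eigenvalue at the source of an added edge and that, for the chosen cycle, this direction is transverse rather than expanding. Once that is pinned down, the rest is the completeness argument already carried out in Proposition~\ref{prop:minimal}.
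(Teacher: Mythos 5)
Your proposal is correct and takes essentially the same route as the paper's proof: you add all new edges with sources on the unfavoured cycle, all pointing to the fixed vertex of the favoured cycle immediately after the distribution node (terminating when the chain of $\Delta$-cliques is closed by the pre-existing edge from the distribution node), and you note that the simplex method places the forced positive eigenvalue at the \emph{source} of each edge, so the favoured cycle keeps only its one positive transverse eigenvalue at the distribution node. One terminological nit: the negative eigenvalues created at the common target vertex by the incoming added edges are \emph{transverse} (not contracting) with respect to the favoured cycle, though this does not affect the argument.
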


\begin{proof}
The proof works similarly to that of Proposition~\ref{prop:minimal} and we illustrate it through the example in Figure~\ref{fig:no-common-connection}: There the two-dimensional unstable manifold of the distribution node $\xi_2$ is contained through the construction of a $\Delta$-clique by adding an edge between the next two vertices, $\xi_3$ and $\xi_4$ in the example. Depending on the choice of direction for this edge we then proceed to add more edges connecting to the same vertex. The process finishes (i.e.\ the network resulting from applying the simplex method to the extended graph is complete) once the vertex just before the distribution vertex is reached, in the case of Figure~\ref{fig:no-common-connection} this is $\xi_6$. Since the new edges all point to the same vertex, the cycle containing this vertex (on the right in Figure~\ref{fig:no-common-connection}) has no edges added that point away from it. This means that the only positive transverse eigenvalue is the one at the distribution node. Choosing the opposite direction for the first new edge ($[\xi_3 \to \xi_4]$ instead of $[\xi_4 \to \xi_3]$) yields a different complete realisation where the other cycle has only one positive transverse eigenvalue (also at the distribution node, $\xi_2$).
\end{proof}

Notice that in the proof above more vertices become distribution vertices as edges are added. However, for a given cycle ($[\xi_1 \rightarrow \xi_2 \rightarrow \xi_3]$ in Figure~\ref{fig:no-common-connection}), only the original distribution vertex corresponds to an equilibrium where the Jacobian matrix has a positive transverse eigenvalue. This is the minimal number of positive eigenvalues when a cycle is part of a heteroclinic network.

\begin{corollary}
This realisation is minimal if and only if all added edges point towards the longer cycle.
\end{corollary}

\begin{proof}
This follows directly from Proposition~\ref{prop:minimal}.
\end{proof}

\subsection{One or more common connection/edge}
Let $m\geq 1$ be the number of common connections. Since we use the simplex method both cycles have to be of length at least three. Thus, if $m=1$, we have $k \geq 3=m+2$ for the shorter cycle. If $m>1$, then $k=m+1$ is possible. We treat the cases $k\leq m+2$ and $k>m+2$ separately in the next two propositions.

\begin{figure}[!htb]
 \centerline{
\begin{tikzpicture}[xscale=1, yscale=1]
\filldraw[black] (0,0) circle (2pt);
\filldraw[black] (2,0) circle (2pt);
\filldraw[black] (4,0) circle (2pt) node[right] {\small$\xi_d$};
\filldraw[black] (2,2) circle (2pt);
\filldraw[black] (-1,-1) circle (2pt);
\filldraw[black] (0,-2) circle (2pt);
\filldraw[black] (4,-2) circle (2pt);
\filldraw[black] (5,-1) circle (2pt);
\path (0,0) edge[out=0,in=0, black,thick, middlearrow={>}{}{}] (1.9,0);
\path (2,0) edge[out=0,in=0, black,thick, middlearrow={>}{}{}] (3.9,0);
\path (4,0) edge[out=135,in=-45, black,thick, middlearrow={>}{}{}] (2,2);
\path (2,2) edge[out=225,in=45, black,thick, middlearrow={>}{}{}] (0,0);
\path (4,0) edge[out=-45,in=135, black,thick, middlearrow={>}{}{}] (5,-1);
\path (5,-1) edge[out=225,in=45, black,thick, middlearrow={>}{}{}] (4,-2);
\path (4,-2) edge[out=180,in=0, black,thick, middlearrow={>}{}{}] (2.5,-2);
\path (1.5,-2) edge[out=180,in=0, black,thick, middlearrow={>}{}{}] (0,-2);
\path (0,-2) edge[out=135,in=-45, black,thick, middlearrow={>}{}{}] (-1,-1);
\path (-1,-1) edge[out=225,in=45, black,thick, middlearrow={>}{}{}] (0,0);
\filldraw[black] (2,-1.85) circle (0pt) node[below]  {$\hdots$};
\begin{scope}[very thick,decoration={markings, mark=at position 0.35 with {\arrow[scale=2]{>}}}] 
\path (5,-1) edge[out=150,in=-80, black,thick, dashed, postaction={decorate}] (2,2);
\path (4,-2) edge[out=150,in=-80, black,thick, dashed, postaction={decorate}] (2,2);
\path (0,-2) edge[out=60,in=-100, black,thick, dashed, postaction={decorate}] (2,2);
\path (-1,-1) edge[out=20,in=-110, black,thick, dashed, postaction={decorate}] (2,2);
\end{scope}
\end{tikzpicture}
}
\caption{A complete realisation so that the shorter cycle has only one positive transverse eigenvalue at the distribution node, $\xi_d$. Here $m=2$, $k=m+2=4$. The realisation is not minimal.
 \label{fig:no-extra-positive}}
\end{figure}
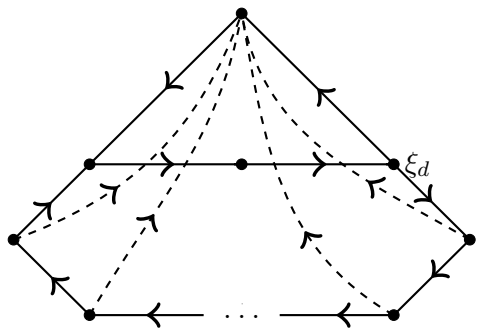

\begin{proposition}\label{prop:no-extra-positive}
Consider a digraph consisting of two cycles that have $m \geq 1$ common edges. If the shorter cycle is of length at most $m+2$, there is a complete realisation such that the only positive transverse eigenvalue of this cycle is at the distribution node.
\end{proposition}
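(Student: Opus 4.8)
The plan is to reuse the $\Delta$-clique cascade of Propositions~\ref{prop:minimal} and~\ref{prop:stab-no-common-edge}, but to orient \emph{every} added edge \emph{from the longer cycle to the shorter one}, with all added edges pointing at a single target vertex. Write the common path as $v_c=u_0\to u_1\to\cdots\to u_m=v_d$, so $v_c$ is the collection and $v_d$ the distribution vertex; let $v_{a1}$ be the vertex following $v_d$ on the shorter cycle and $v_{b1}$ the vertex following $v_d$ on the longer cycle, and label the non-common vertices of the longer cycle $v_{b1},\dots,v_{b\ell'}$ with $\ell'=\ell-(m+1)$, so that the longer cycle closes through $v_{b\ell'}\to v_c$. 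The hypothesis $k\le m+2$ says the shorter cycle has at most one vertex off the common path: either $k=m+2$ and $v_{a1}$ is that single vertex with $[v_{a1}\to v_c]$ an edge of the shorter cycle, or $k=m+1$ and $v_{a1}=v_c$.

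First I would contain $W^u(v_d)$ by adding $[v_{b1}\to v_{a1}]$, producing the $\Delta$-clique $\{v_d,v_{a1},v_{b1}\}$ with b-point $v_d$; this turns $v_{b1}$ into a distribution vertex, whose unstable manifold is captured by adding $[v_{b2}\to v_{a1}]$, and so on along the longer cycle, each new edge again aimed at $v_{a1}$. The crux is \emph{termination}, and this is exactly where $k\le m+2$ enters: because the single target $v_{a1}$ is joined to $v_c$ by an edge already present in $G$ --- the shorter-cycle edge $[v_{a1}\to v_c]$ when $k=m+2$, while $v_{a1}=v_c$ itself when $k=m+1$ --- the final triangle $\{v_{b\ell'},v_c,v_{a1}\}$ closes into a $\Delta$-clique using only pre-existing edges, and the cascade stops at $v_c$. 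Since $v_c$ keeps out-degree one (the first common edge $[v_c\to u_1]$ is shared), it never becomes a distribution vertex, and in particular \emph{no vertex of the shorter cycle is ever turned into a distribution vertex}. Every distribution node of the extended graph is now the b-point of a $\Delta$-clique, so by the discussion preceding Proposition~\ref{prop:minimal} the simplex realisation is complete; the number of added edges is $\ell-(m+1)$ or $\ell-(m+2)$, in agreement with the longer-to-shorter count of Proposition~\ref{prop:more-common}.

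It remains to count positive transverse eigenvalues on the shorter cycle. By the mechanism recorded in Proposition~\ref{prop:minimal}, an added outgoing edge forces a positive transverse eigenvalue \emph{at its source}; but every edge we added has its source among the $v_{bi}$, i.e.\ on the longer cycle, so all of these positive eigenvalues sit at longer-cycle equilibria. The added edges are \emph{incoming} at $v_{a1}$, and in the simplex realisation the endpoint of a connection is stable along that direction; since $v_{bi}$ lies off the shorter cycle this gives a negative transverse eigenvalue at $v_{a1}$. The remaining shorter-cycle vertices $u_1,\dots,u_{m-1}$ and $v_c$ retain out-degree one within the cycle and so contribute no transverse positive eigenvalue. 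Consequently the only positive transverse eigenvalue of the shorter cycle is the unavoidable one at $v_d$, arising from the second unstable direction $[v_d\to v_{b1}]$, which is transverse to the shorter cycle. I expect the main obstacle to be the bookkeeping of the termination step and the clean separation of the $k=m+1$ and $k=m+2$ cases, together with the sign claim that an incoming added edge can only lower, never raise, a transverse eigenvalue past zero; the contrast with the short-to-long orientation --- which would instead place an extra positive transverse eigenvalue at $v_{a1}$ --- both forces the present choice of orientation and explains why the resulting complete realisation need not be minimal.
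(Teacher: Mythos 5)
Your proposal is correct and takes essentially the same approach as the paper: orient every added edge from the longer cycle towards the single off-path vertex of the shorter cycle (or towards the collection vertex itself when $k=m+1$), so the $\Delta$-clique cascade terminates through the pre-existing shorter-cycle edge into $v_c$, all forced positive eigenvalues sit at longer-cycle equilibria, and the expanding direction $[v_d \to v_{b1}]$ remains the shorter cycle's only positive transverse eigenvalue. Your write-up is in fact more detailed than the paper's brief proof, which just invokes the analogy with Proposition~\ref{prop:stab-no-common-edge} and Figure~\ref{fig:no-extra-positive}, while your termination bookkeeping and edge counts $\ell-(m+1)$ and $\ell-(m+2)$ agree with Proposition~\ref{prop:more-common}.
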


\begin{proof}
The proof works analogously to that of Proposition~\ref{prop:stab-no-common-edge}. We choose the direction of the added edges such that they point towards the shorter cycle, resulting in a situation as shown in Figure~\ref{fig:no-extra-positive}. No edges pointing away from the shorter cycle are needed because there is only one vertex in the shorter cycle that does not belong to both cycles, and thus, there is a $\Delta$-clique involving the collection vertex and the two vertices just before it (one in either cycle).
\end{proof}

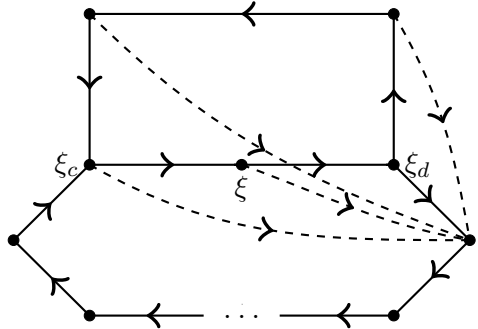
\begin{figure}[!htb]
 \centerline{
\begin{tikzpicture}[xscale=1, yscale=1]
\filldraw[black] (0,0) circle (2pt) node[left] {\small$\xi_c$};
\filldraw[black] (2,0) circle (2pt) node[below] {\small$\xi$};
\filldraw[black] (4,0) circle (2pt) node[right] {\small$\xi_d$};
\filldraw[black] (4,2) circle (2pt);
\filldraw[black] (0,2) circle (2pt);
\filldraw[black] (-1,-1) circle (2pt);
\filldraw[black] (0,-2) circle (2pt);
\filldraw[black] (4,-2) circle (2pt);
\filldraw[black] (5,-1) circle (2pt);
\path (0,0) edge[out=0,in=0, black,thick, middlearrow={>}{}{}] (1.9,0);
\path (2,0) edge[out=0,in=0, black,thick, middlearrow={>}{}{}] (3.9,0);
\path (4,0) edge[out=90,in=-90, black,thick, middlearrow={>}{}{}] (4,2);
\path (4,2) edge[out=180,in=0, black,thick, middlearrow={>}{}{}] (0,2);
\path (0,2) edge[out=-90,in=90, black,thick, middlearrow={>}{}{}] (0,0);
\path (4,0) edge[out=-45,in=135, black,thick, middlearrow={>}{}{}] (5,-1);
\path (5,-1) edge[out=225,in=45, black,thick, middlearrow={>}{}{}] (4,-2);
\path (4,-2) edge[out=180,in=0, black,thick, middlearrow={>}{}{}] (2.5,-2);
\path (1.5,-2) edge[out=180,in=0, black,thick, middlearrow={>}{}{}] (0,-2);
\path (0,-2) edge[out=135,in=-45, black,thick, middlearrow={>}{}{}] (-1,-1);
\path (-1,-1) edge[out=225,in=45, black,thick, middlearrow={>}{}{}] (0,0);
\filldraw[black] (2,-1.85) circle (0pt) node[below]  {$\hdots$};
\path (4,2) edge[out=-60,in=100, black,thick, dashed, middlearrow={>}{}{}] (5,-1);
\path (0,2) edge[out=-45,in=160, black,thick, dashed, middlearrow={>}{}{}] (5,-1);
\path (0,0) edge[out=-30,in=180, black,thick, dashed, middlearrow={>}{}{}] (5,-1);
\path (2,0) edge[out=-20,in=170, black,thick, dashed, middlearrow={>}{}{}] (5,-1);
\end{tikzpicture}
}
\caption{A complete realisation so that the longer cycle has three positive transverse eigenvalues, one at the distribution node, $\xi_d$, another at the collection node, $\xi_c$, and the third at the equilibrium $\xi$, belonging to the common connections. Here $m=2$, $k=m+3=5$. The realisation is minimal.
 \label{fig:more-positive}}
\end{figure}

\begin{proposition}
Consider a digraph consisting of two cycles that have $m \geq 1$ common edges. If the shorter cycle is of length greater than $m+2$, then for each cycle there is a complete realisation such that this cycle has $m+1$ positive transverse eigenvalues, namely at all the common equilibria.
\end{proposition}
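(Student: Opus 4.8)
The plan is to adapt the edge-addition construction from the proofs of Propositions~\ref{prop:minimal} and~\ref{prop:more-common}, choosing the orientation of the added edges so as to single out the cycle we wish to control. Write the $m+1$ common vertices as a directed path $\xi_c = \eta_0 \to \eta_1 \to \cdots \to \eta_{m-1} \to \eta_m = \xi_d$ running from the collection node to the distribution node, and let $v_{a1}$ (resp.\ $v_{b1}$) be the first vertex of the shorter (resp.\ longer) cycle after $\xi_d$. Since $\ell \geq k > m+2$, each cycle has at least two vertices outside the common path. The mechanism used throughout is that an added outgoing edge at a vertex $v$ forces a positive eigenvalue at $v$ in the simplex realisation, transverse to any cycle through $v$ whose expanding direction differs from that edge; in addition, the distribution node $\xi_d$ always carries one positive transverse eigenvalue in each cycle, coming from its original edge into the other cycle.

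To realise the longer cycle with the asserted pattern I would run the construction of Proposition~\ref{prop:minimal} with all new edges oriented from the shorter to the longer cycle, i.e.\ pointing at $v_{b1}$. The moving distribution vertex then sweeps along the shorter cycle and, crucially, does not terminate on reaching $\xi_c$: because $k > m+2$ the shortcut edge $v_{a1} \to \xi_c$ is absent, so the process is forced to continue through each common vertex $\eta_i$ and only closes when it returns to $\xi_d$, the final $\Delta$-clique being completed by the already-present edge $\xi_d \to v_{b1}$. The sources of the added edges are thus exactly the shorter-cycle vertices other than $\xi_d$; intersecting this set with the longer cycle leaves precisely $\eta_0, \dots, \eta_{m-1}$. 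Together with the inherent eigenvalue at $\eta_m = \xi_d$ this gives exactly $m+1$ positive transverse eigenvalues in the longer cycle, located at the common equilibria, while the longer-cycle-unique vertices receive no added outgoing edge and so keep negative transverse eigenvalues.

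For the shorter cycle I would orient all new edges the other way, from the longer to the shorter cycle and pointing at $v_{a1}$; the previous analysis then applies with the two cycles interchanged. The moving distribution vertex sweeps the longer-unique vertices, is again prevented from closing at $\xi_c$ because $k > m+2$ makes $v_{a1} \to \xi_c$ absent, continues through the entire common path, and terminates at $\xi_d$ via the existing edge $\xi_d \to v_{a1}$. The sources are now the longer-cycle vertices other than $\xi_d$, and their intersection with the shorter cycle is once more $\{\eta_0, \dots, \eta_{m-1}\}$, yielding (with $\xi_d$) the required $m+1$ positive transverse eigenvalues at the common equilibria. In both realisations completeness is inherited directly from Proposition~\ref{prop:minimal}: every distribution vertex of the extended graph is the b-point of a $\Delta$-clique, so all two-dimensional unstable manifolds are captured.

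The step I expect to be most delicate is verifying that the propagation really traverses all $m+1$ common vertices instead of stopping early, as it does in the regime $k = m+2$ of Proposition~\ref{prop:no-extra-positive}. This is exactly where the hypothesis $k > m+2$ is used: with at least two vertices between $\xi_d$ and $\xi_c$ on the relevant cycle, the shortcut edge into $\xi_c$ does not exist, no premature $\Delta$-clique can form at the collection node, and every common vertex is forced to become a distribution vertex and hence a source. The remaining bookkeeping---that no unique vertex of the target cycle acquires an added outgoing edge, and that each of the $m+1$ forced positive eigenvalues is transverse rather than expanding for the target cycle---is routine given the structure of the simplex realisation.
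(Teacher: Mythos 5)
Your proposal is correct and takes essentially the same route as the paper: iterate the $\Delta$-clique construction of Propositions~\ref{prop:minimal} and~\ref{prop:more-common} with all added edges pointing at the first vertex of one cycle after $\xi_d$, note that the sweep is forced through the entire common path, and count one positive transverse eigenvalue at each of $\eta_0,\dots,\eta_{m-1}$ from the added outgoing edges plus the inherent one at $\xi_d$; reversing the orientation handles the other cycle, exactly as in the paper's proof and Figure~\ref{fig:more-positive}.

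One justification is misattributed, though harmlessly so. In the short-to-long direction (your first paragraph) the continuation past $\xi_c$ has nothing to do with the absence of the edge $v_{a1}\to\xi_c$: a premature $\Delta$-clique at that stage would require an edge between a common-path vertex and the target $v_{b1}$, and the only original edge at $v_{b1}$ apart from $v_{b1}\to v_{b2}$ is $\xi_d\to v_{b1}$, so the sweep traverses all of $\eta_0,\dots,\eta_{m-1}$ for \emph{every} $k\geq m+1$ --- this is precisely why the short-to-long process in Proposition~\ref{prop:more-common} always costs $k-1$ edges, with no case distinction. The hypothesis $k>m+2$ enters only where your second paragraph correctly places it: in the long-to-short direction, $k=m+2$ makes $v_{a1}\to\xi_c$ present, the clique at the last longer-only vertex closes early, and one lands in the regime of Proposition~\ref{prop:no-extra-positive} with a single positive transverse eigenvalue for the shorter cycle; $k>m+2$ removes that shortcut and forces the sweep through the common vertices. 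Since the claim your misattributed sentence supports is true (and is established in the paper), the proof as a whole stands.
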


\begin{proof}
If the shorter cycle has length greater than $m+2$, each cycle has at least two vertices that do not belong to the other cycle. Repeating the same process as in the previous proofs, we have to keep adding edges until all vertices in one cycle are directly connected to the same vertex in the other cycle. The edges at the common vertices produce positive transverse eigenvalues. Their number is $m+1$. This is illustrated in Figure~\ref{fig:more-positive}. Edges in the opposite direction give a different complete realisation with $m+1$ positive transverse eigenvalues for the other cycle.
\end{proof}

\begin{corollary}
This realisation is minimal if and only if all added edges point towards the longer cycle.
\end{corollary}

\begin{proof}
This follows from Propositions~\ref{prop:minimal} and \ref{prop:more-common}, see Figure~\ref{fig:more-positive}.
\end{proof}

These results provide information on the number of positive transverse eigenvalues that occur in complete realisations of the type considered here. This is necessary for a detailed investigation of the stability of individual cycles through the established methods, e.g.\ calculating stability indices, see e.g.~\cite{PodAsh2011, GarCas2019, Pod2012}.

As an example consider the House network, detailed calculations are given in Appendix~\ref{app:house}. From Proposition~\ref{prop:no-extra-positive} and the block diagonal form of the transition matrices it follows that the stability of the cycle of length three can be obtained by using results by Podvigina and Ashwin ~\cite{PodAsh2011} in $\R^4$. We show in Appendix~\ref{app:house} that the results for $\R^4$ are not sufficient for determining the stability of the cycle of length four.

\section{Networks with more than two cycles}\label{sec:further}

In this section we address two ways in which a graph can contain more than two cycles and comment on the extent to which our method of obtaining a complete realisation can still be applied. These are

\begin{enumerate}
	\item[(i)] there is more than one distribution vertex,
	\item[(ii)] there is a distribution vertex with more than two outgoing edges.
\end{enumerate}

According to Lemma~\ref{lem:all-b-points} we focus on cases with at least one distribution vertex that is not the b-point of a $\Delta$-clique.

The ``donut'' graph has four cycles. There are two collection vertices and two distribution vertices, labelled $\xi_{c1}$, $\xi_{c2}$ and $\xi_{d1}$, $\xi_{d2}$ in Figure~\ref{fig:donut-simple}, respectively. A complete realisation is obtained by adding two edges, one to create a $\Delta$-clique at $\xi_{d1}$ and another at $\xi_{d2}$. This can be done in any order and the orientation of the added edges does not matter.

\begin{figure}[!htb]
\centerline{
\begin{tikzpicture}[xscale=1, yscale=1]
\filldraw[black] (0,0) circle (2pt) node[left] {\small$\xi_{d1}$};
\filldraw[black] (2,0) circle (2pt) node[above] {\small$\xi_{c2}$};
\filldraw[black] (3,0.5) circle (2pt);
\filldraw[black] (3,-0.5) circle (2pt);
\filldraw[black] (4,0) circle (2pt) node[above] {\small$\xi_{d2}$};
\filldraw[black] (6,0) circle (2pt) node[right] {\small$\xi_{c1}$};
\filldraw[black] (3,2) circle (2pt);
\filldraw[black] (3,-2) circle (2pt);
\path (2,0) edge[black,thick, middlearrow={>}{}{}] (0,0);
\path (3,0.5) edge[black,thick, middlearrow={>}{}{}] (2,0);
\path (3,-0.5) edge[black,thick, middlearrow={>}{}{}] (2,0);
\path (4,0) edge[black,thick, middlearrow={>}{}{}] (3,0.5);
\path (4,0) edge[black,thick, middlearrow={>}{}{}] (3,-0.5);
\path (6,0) edge[black,thick, middlearrow={>}{}{}] (4,0);
\path (0,0) edge[black,thick, middlearrow={>}{}{}] (3,2);
\path (3,2) edge[black,thick, middlearrow={>}{}{}] (6,0);
\path (0,0) edge[black,thick, middlearrow={>}{}{}] (3,-2);
\path (3,-2) edge[black,thick, middlearrow={>}{}{}] (6,0);
\path (3,0.5) edge[black,thick, dashed] (3,-0.5);
\path (3,2) edge[out=235,in=115,black,thick, dashed] (3,-2);
\end{tikzpicture}
}
\caption{The ``donut'' graph has two distribution vertices and consists of four cycles, corresponding to the choices up-down from each distribution vertex. The two dashed edges allow for a complete realisation by introducing the two $\Delta$-cliques necessary to capture the unstable manifold of each distribution node. The orientation of these edges is irrelevant. \label{fig:donut-simple}}
\end{figure}
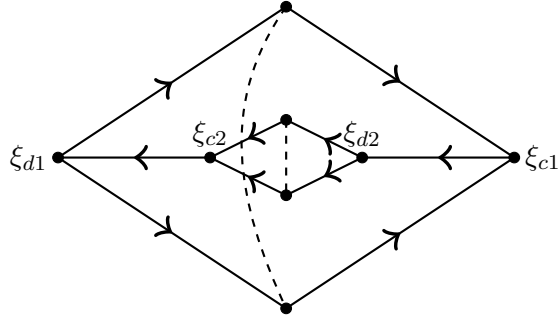

Now consider a donut graph with more than one vertex between a distribution and a collection vertex, and thus more edges. Our procedure of Section~\ref{sec:completions} can still be applied until a vertex of out-degree three is created in the process, see Figure~\ref{fig:donut}. This puts us in situation (ii) from above and it is unclear how to proceed, as explained below.

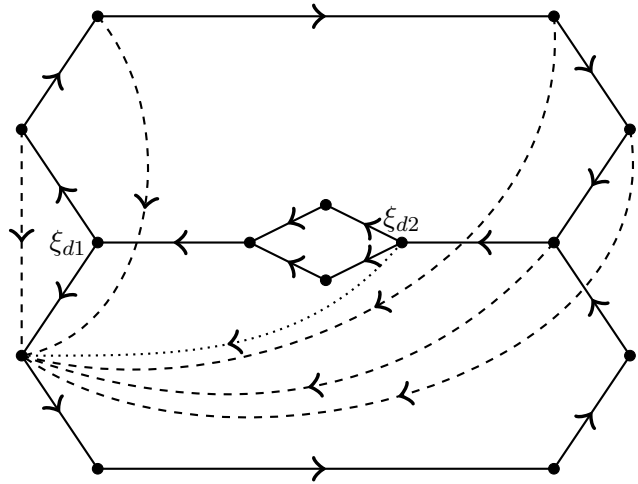
\begin{figure}[!htb]
\centerline{
\begin{tikzpicture}[xscale=1, yscale=1]
\coordinate (1) at (0,0);
\coordinate (2) at (2,0);
\coordinate (3) at (3,0.5);
\coordinate (4) at (3,-0.5);
\coordinate (5) at (4,0);
\coordinate (6) at (6,0);
\coordinate (7) at (-1,1.5);
\coordinate (8) at (0,3);
\coordinate (9) at (6,3);
\coordinate (10) at (7,1.5);
\coordinate (11) at (-1,-1.5);
\coordinate (12) at (0,-3);
\coordinate (13) at (6,-3);
\coordinate (14) at (7,-1.5);
\filldraw[black] (1) circle (2pt) node[left] {\small$\xi_{d1}$};
\filldraw[black] (2) circle (2pt);
\filldraw[black] (3) circle (2pt);
\filldraw[black] (4) circle (2pt);
\filldraw[black] (5) circle (2pt) node[above] {\small$\xi_{d2}$};
\filldraw[black] (6) circle (2pt);
\filldraw[black] (7) circle (2pt);
\filldraw[black] (8) circle (2pt);
\filldraw[black] (9) circle (2pt);
\filldraw[black] (10) circle (2pt);
\filldraw[black] (11) circle (2pt);
\filldraw[black] (12) circle (2pt);
\filldraw[black] (13) circle (2pt);
\filldraw[black] (14) circle (2pt);
\path (2,0) edge[black,thick, middlearrow={>}{}{}] (0,0);
\path (3,0.5) edge[black,thick, middlearrow={>}{}{}] (2,0);
\path (3,-0.5) edge[black,thick, middlearrow={>}{}{}] (2,0);
\path (4,0) edge[black,thick, middlearrow={>}{}{}] (3,0.5);
\path (4,0) edge[black,thick, middlearrow={>}{}{}] (3,-0.5);
\path (6,0) edge[black,thick, middlearrow={>}{}{}] (4,0);
\path (1) edge[black,thick, middlearrow={>}{}{}] (7);
\path (7) edge[black,thick, middlearrow={>}{}{}] (8);
\path (8) edge[black,thick, middlearrow={>}{}{}] (9);
\path (9) edge[black,thick, middlearrow={>}{}{}] (10);
\path (10) edge[black,thick, middlearrow={>}{}{}] (6);
\path (1) edge[black,thick, middlearrow={>}{}{}] (11);
\path (11) edge[black,thick, middlearrow={>}{}{}] (12);
\path (12) edge[black,thick, middlearrow={>}{}{}] (13);
\path (13) edge[black,thick, middlearrow={>}{}{}] (14);
\path (14) edge[black,thick, middlearrow={>}{}{}] (6);
%
\path (7) edge[black,thick, dashed, middlearrow={>}{}{}] (11);
\path (8) edge[out=-50,in=10,black,thick, dashed, middlearrow={>}{}{}] (11);
\path (9) edge[out=-85,in=-15,black,thick, dashed, middlearrow={>}{}{}] (11);
\path (10) edge[out=-80,in=-35,black,thick, dashed, middlearrow={>}{}{}] (11);
\path (6) edge[out=-130,in=-20,black,thick, dashed, middlearrow={>}{}{}] (11);
\path (5) edge[out=-130,in=0,black,thick, dotted, middlearrow={>}{}{}] (11);
\end{tikzpicture}
}
\caption{This ``donut'' graph has more vertices than the previous one. It still has two distribution vertices and consists of four cycles, corresponding to the choices up-down from each distribution vertex. The addition of the dotted edge creates a vertex of out-degree three at $\xi_{d2}$. \label{fig:donut}}
\end{figure}

For a graph with a distribution vertex of out-degree three, the simplex method generates an equilibrium with a three-dimensional unstable manifold. Unlike in the case of a $\Delta$-clique, now the addition of an edge in the graph does not guarantee that this entire unstable manifold limits to one of the other equilibria in the network. An attempt to still capture the unstable manifold is to add edges connecting the vertices with incoming edges from the distribution vertex. This can lead to either a $\Delta$-clique or a cycle of length three between these vertices, see Figure~\ref{fig-3d-unstable-manifold}.

In the case of a $\Delta$-clique, $\xi_3$ becomes a sink in the four-dimensional subspace spanned by the directions containing all the equilibria, and this may lead to a three-dimensional connection from $\xi_d$ to $\xi_3$, a two-dimensional connection from $\xi_d$ to $\xi_2$ and a one-dimensional connection from $\xi_d$ to $\xi_1$.

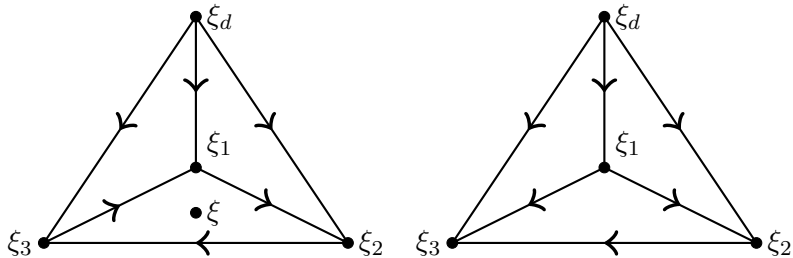
\begin{figure}[!htb]
 \centerline{
\begin{tikzpicture}[xscale=1, yscale=1]
\filldraw[black] (0,0) circle (2pt) node[above right] {\small$\xi_1$};
\filldraw[black] (0,2) circle (2pt) node[right] {\small$\xi_d$};
\filldraw[black] (-2,-1) circle (2pt) node[left] {\small$\xi_3$};
\filldraw[black] (2,-1) circle (2pt) node[right] {\small$\xi_2$};
\filldraw[black] (0,-0.6) circle (2pt) node[right] {\small$\xi$};
\path (0,2) edge[out=-90,in=90, black,thick, middlearrow={>}{}{}] (0,0);
\path (0,2) edge[black,thick, middlearrow={>}{}{}] (-2,-1);
\path (0,2) edge[black,thick, middlearrow={>}{}{}] (2,-1);
\path (0,0) edge[black,thick, middlearrow={>}{}{}] (2,-1);
\path (2,-1) edge[black,thick, middlearrow={>}{}{}] (-2,-1);
\path (-2,-1) edge[black,thick, middlearrow={>}{}{}] (0,0);
\end{tikzpicture}
%
\begin{tikzpicture}[xscale=1, yscale=1]
\filldraw[black] (0,0) circle (2pt) node[above right] {\small$\xi_1$};
\filldraw[black] (0,2) circle (2pt) node[right] {\small$\xi_d$};
\filldraw[black] (-2,-1) circle (2pt) node[left] {\small$\xi_3$};
\filldraw[black] (2,-1) circle (2pt) node[right] {\small$\xi_2$};
\path (0,2) edge[out=-90,in=90, black,thick, middlearrow={>}{}{}] (0,0);
\path (0,2) edge[black,thick, middlearrow={>}{}{}] (-2,-1);
\path (0,2) edge[black,thick, middlearrow={>}{}{}] (2,-1);
\path (0,0) edge[black,thick, middlearrow={>}{}{}] (2,-1);
\path (2,-1) edge[black,thick, middlearrow={>}{}{}] (-2,-1);
\path (0,0) edge[black,thick, middlearrow={>}{}{}] (-2,-1);
\end{tikzpicture}
}
 \caption{A subgraph that causes the simplex method to generate a three-dimensional unstable manifold. The edges between the vertices $\xi_1, \xi_2, \xi_3$, which have incoming edges from the distribution vertex $\xi_d$, form a cycle of length three (left) or a $\Delta$-clique (right). In the left case an additional equilibrium $\xi$ appears in the realisation: its existence is guaranteed by Poincar\'e-Bendixson and it does not belong to the network.\label{fig-3d-unstable-manifold}}
 \end{figure}

In the case of a cycle, each equilibrium has a non-trivial unstable manifold within the four-dimensional subspace and thus none of the equilibria is a sink. This typically leads to the existence of an extra equilibrium as sketched in Figure~\ref{fig-3d-unstable-manifold}. Since the simplex method produces attracting radial directions, this implies that none of the stable manifolds of $\xi_j$, $j \in \{1,2,3\}$ can contain a three-dimensional subset of the unstable manifold of $\xi_d$. It is possible that a large part of the unstable manifold of $\xi_d$ limits to the cycle between $\xi_1, \xi_2, \xi_3$, producing a depth two heteroclinic connection, see Definition 2.22 in~\cite{AshFie1999}. An example where such a situation is unavoidable is shown in Figure~\ref{fig:no-realisation}, where there is just one distribution node (of out-degree two) which is not the b-point of a $\Delta$-clique. The graph in the figure can be made strongly connected by adding suitable edges.

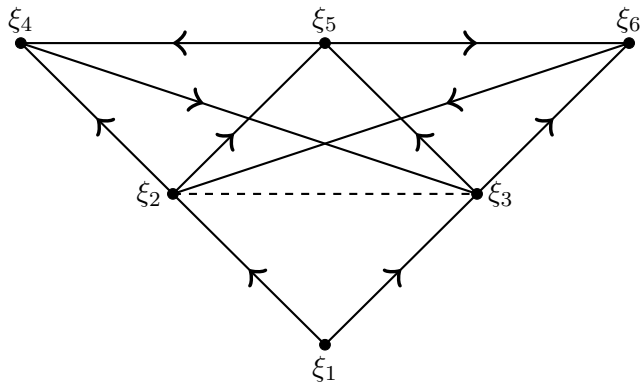
\begin{figure}[!htb]
\centerline{
\begin{tikzpicture}[xscale=1, yscale=1]
\coordinate (1) at (0,0);
\coordinate (2) at (-2,2);
\coordinate (3) at (2,2);
\coordinate (4) at (-4,4);
\coordinate (5) at (0,4);
\coordinate (6) at (4,4);
\filldraw[black] (1) circle (2pt) node[below] {\small$\xi_{1}$};
\filldraw[black] (2) circle (2pt) node[left] {\small$\xi_{2}$};
\filldraw[black] (3) circle (2pt) node[right] {\small$\xi_{3}$};
\filldraw[black] (4) circle (2pt) node[above] {\small$\xi_{4}$};
\filldraw[black] (5) circle (2pt) node[above] {\small$\xi_{5}$};
\filldraw[black] (6) circle (2pt) node[above] {\small$\xi_{6}$};
\begin{scope}[very thick,decoration={markings, mark=at position 0.4 with {\arrow[scale=2]{>}}}] 
\path (2) edge[black,thick,postaction={decorate}] (5);
\path (3) edge[black,thick,postaction={decorate}] (5);
\path (4) edge[black,thick,postaction={decorate}] (3);
\path (6) edge[black,thick,postaction={decorate}] (2);
\end{scope}
\path (1) edge[black,thick, middlearrow={>}{}{}] (2);
\path (1) edge[black,thick, middlearrow={>}{}{}] (3);
\path (2) edge[black,thick, middlearrow={>}{}{}] (4);
\path (3) edge[black,thick, middlearrow={>}{}{}] (6);
\path (5) edge[black,thick, middlearrow={>}{}{}] (4);
\path (5) edge[black,thick, middlearrow={>}{}{}] (6);
\path (2) edge[black,thick, dashed] (3);
\end{tikzpicture}
}
\caption{A graph with four distribution vertices labelled $\xi_1$, $\xi_2$, $\xi_3$, and $\xi_5$. The dashed edge between $\xi_2$ and $\xi_3$ is required to contain the 2-dimensional unstable manifold of $\xi_1$ in the realisation. Regardless of the orientation of this edge, a cycle of length three appears leading to a connection of depth two in the network. This cycle consists of either $[\xi_2 \rightarrow \xi_5 \rightarrow \xi_6 \rightarrow \xi_2]$ or $[\xi_3 \rightarrow \xi_5 \rightarrow \xi_4 \rightarrow \xi_3]$.\label{fig:no-realisation}}
\end{figure}

These considerations allow us to conclude that not every graph has a complete realisation.

On the other hand, there are also graphs that do not require the addition of further edges in order for the heteroclinic network resulting from the simplex method to be complete.
\begin{lemma}\label{lem:all-b-points}
Let $G$ be a strongly connected digraph such that every distribution vertex has out-degree two and is the b-point of a $\Delta$-clique in $G$. Then the heteroclinic network obtained by realizing $G$ with the simplex method is complete.
\end{lemma}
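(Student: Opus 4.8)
The plan is to verify completeness straight from its definition: the network $X$ realising $G$ is complete precisely when, for every equilibrium $\xi_j$, the full unstable manifold $W^u(\xi_j)$ is contained in $X$. Since the simplex method realises each vertex as an equilibrium on its own coordinate axis and the out-degree of a vertex equals the dimension of the unstable manifold of the corresponding equilibrium, I would organise the argument as a case distinction according to out-degree, and then show that the unstable manifold is captured in each case.

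First I would observe that the hypotheses force every vertex to have out-degree either one or two. Indeed, any vertex with out-degree greater than one is by definition a distribution vertex, and by assumption each such vertex has out-degree exactly two. Moreover, since $G$ is strongly connected, every vertex has at least one outgoing edge, so a non-distribution vertex has out-degree exactly one. Thus the equilibria split into exactly two classes, and it suffices to control $W^u(\xi_j)$ in each.

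For a vertex of out-degree one, the corresponding equilibrium $\xi_j$ has a one-dimensional unstable manifold. By construction of the simplex method this manifold lies in the coordinate plane spanned by $\xi_j$ and its unique successor and consists, together with its image under the $\Z_2^n$ reflection symmetry, of the single heteroclinic connection leaving $\xi_j$. This connection is an edge of $G$ and hence belongs to $X$, so $W^u(\xi_j) \subset X$. For a distribution vertex $\xi_j$, which by hypothesis has out-degree two and is the b-point of a $\Delta$-clique in $G$, I would invoke the fact already recorded in the Preliminaries that the simplex method turns a graph $\Delta$-clique into a network $\Delta$-clique, together with the property that in a network $\Delta$-clique the entire two-dimensional unstable manifold of the b-point is bounded and every trajectory in it converges to one of the two remaining equilibria of the clique. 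Since $\xi_j$ has out-degree exactly two, its two outgoing edges are precisely the two edges of the $\Delta$-clique emanating from the b-point, so all of $W^u(\xi_j)$ is captured inside the clique and hence lies in $X$.

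Combining the two cases yields $W^u(\xi_j) \subset X$ for every equilibrium, which is exactly the assertion that $X$ is complete. The argument is essentially a bookkeeping reduction to the already-established behaviour of $\Delta$-cliques under the simplex method, so I do not expect a genuine obstacle; the only point requiring slight care is the exhaustiveness of the case distinction, namely confirming that strong connectivity together with the out-degree-two hypothesis leaves no vertex with out-degree zero or greater than two, so that the two cases above genuinely cover all equilibria.
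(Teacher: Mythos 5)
Your proof is correct and follows essentially the same route as the paper, whose one-line argument simply notes that the two-dimensional unstable manifolds of the distribution nodes are contained in the $\Delta$-cliques and hence in the network. Your additional bookkeeping (strong connectivity ruling out out-degree zero, and the one-dimensional unstable manifolds of out-degree-one vertices coinciding with edges of $G$) just makes explicit what the paper leaves implicit.
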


\begin{proof}
The two-dimensional manifolds of the distribution nodes are contained in the $\Delta$-cliques and are therefore part of the heteroclinic network.
\end{proof}

\section{Concluding remarks}\label{sec:conclusion}

We have established a method for obtaining complete heteroclinic networks from graphs with two cycles. This method does not add equilibria to the network and allows for some control over the stability of each heteroclinic cycle. 

Our remarks in Section~\ref{sec:further} show that applying this method to more general graphs inevitably encounters very particular features. One immediate extension of our work would be to consider joining two complete heteroclinic networks rather than two heteroclinic cycles. This is beyond the scope of this article and perhaps better addressed case by case, as interest arises. Some of our tools may, however, still be of use.

Our considerations show that we often encounter a cycle for which one of the following is true: (i) there is a single equilibrium where the Jacobian matrix has a unique positive transverse eigenvalue, all other transverse eigenvalues (at all other equilibria) are negative; or (ii), there is one positive transverse eigenvalue of the Jacobian matrix at every equilibrium in the cycle.

From the stability perspective, this prompts the following questions: assuming that all quantities not specified above can be chosen as desired, how stable can a quasi-simple heteroclinic cycle be in these two cases? We expect that in case (i) such a cycle can be e.a.s. In case (ii) we anticipate the orientation of the transverse directions corresponding to the positive eigenvalues to play a role in the stability of the cycle.	

These questions are left to be addressed elsewhere.

\paragraph{Acknowledgements:}
The first author was partially supported by CMUP, member of LASI, which is financed by national funds through FCT -- Funda\c{c}\~ao para a Ci\^encia e a Tecnologia, I.P., with reference UIDB/00144/2025.

Part of the work was done when the authors visited each other's institutions and the hospitality of the University of Porto and the University of Hamburg is gratefully acknowledged.

\appendix
\section{List of networks with two cycles}\label{app:list}
We provide here a list of some heteroclinic networks consisting of precisely two cycles that have been studied in the literature. Not all of these can be thought of as created by the simplex method (indicated in the penultimate column), nevertheless this list might help to put our work into proper context.

Recall that $k$ and $\ell$ are the number of equilibria in the shorter and longer cycle, respectively, while $m$ is the number of common connections between the two cycles.

For a systematic overview of all graphs associated with heteroclinic networks in $\R^4$, not limited to only two cycles, see~\cite{PodLoh2019}.

\begin{table}[H]
\begin{center}
\begin{tabular}{ c | c | c | c | c | c | c }
$m$ & $k$ & $\ell$ & Network & Space & Simplex? & Reference  \\ \hline \hline 
0 & 2 & 2 & $(C_2^-, C_2^-)$ & $\R^6$  & no & \cite{Gar2018} \\ \hline
0 & 3 & 3 & Bowtie & $\R^5$  & yes & \cite{CasLoh2016a} \\ \hline
1 & 2 & 2 & $(B_2^+, B_2^+)$ & $\R^4$ & no & \cite{CasLoh2016b} \\ \hline
1 & 3 & 3 & Kirk-Silber & $\R^4$ & yes & \cite{KirSil1994} \\ \hline
1 & 3 & 4 & House & $\R^5$ & yes & \cite{CasLoh2016a} \\ \hline
1 & 4 & 4 &  & $\R^6$ & yes & \cite{Pod2023} \\ \hline
2 & 3 & 4 & $(B_3^-, C_4^-)$ & $\R^4$ & yes & \cite{Bra1994} \\ \hline
2 & 6 & 6 &  & $(\R/2\pi\Z)^4$ & no & \cite{BickLoh2019} \\
\end{tabular}
\end{center}
\caption{Heteroclinic networks consisting of precisely two cycles.} \label{table:networks}
\end{table}

\section{Calculations for the House network}\label{app:house}

The graph for the House network is shown in Figure~\ref{fig-KS-Bowtie-House} (right). Since it has five vertices, it can be realised by the simplex method in $\R^5$. We focus on two complete realisations which both require the addition of two new edges in the graph and are therefore minimal in the sense of this paper. For both of the following two cases we outline how the study of stability of cycles in the network may or may not be reduced to the study of a lower-dimensional cycle for which results are available in the literature:

\begin{itemize}
	\item[Case (a)] Adding two edges, from both $\xi_4$ and $\xi_5$ to $\xi_3$. The new edges are oriented towards the cycle of length three and, thus, we are interested in the stability of this cycle.
	\item[Case (b)] Adding two edges, from both $\xi_3$ and $\xi_1$ to $\xi_4$. The new edges are oriented towards the cycle of length four and, thus, we are interested in the stability of this cycle.
\end{itemize}

We denote by $-c_{ij}<0$ the contracting eigenvalue, by $e_{ij}>0$ the expanding eigenvalue and by $\pm t_{ij} \gtrless 0$ the transverse eigenvalue at $\xi_i$ in the direction of $\xi_j$. The two exceptions occur at $\xi_1$ where $c_{13}$ and $c_{15}$ represent either contracting or transverse eigenvalues, and at $\xi_2$ where $e_{23}$ and $e_{24}$ represent expanding or transverse eigenvalues, depending on which cycle is of interest. All $c_{ij}$, $e_{ij}$, and $t_{ij}$ are positive.
We assume throughout this appendix that all the transverse eigenvalues are negative, except when explicitly necessary (and explicitly stated).

As usual the stability is determined by the properties of the return map to cross sections near the equilibria of the cycle. These can be obtained from basic transition matrices and their products which express the dynamics with respect to logarithmic coordinates, see \cite{KruMel1995}. A basic transition matrix describes the flow between two consecutive cross sections, $H^{\inn,j-1}_j \rightarrow H^{\inn,j}_{j+1}$, respectively, across a connecting trajectory from $\xi_{j-1}$ to $\xi_j$ and across a connecting trajectory from $\xi_{j}$ to $\xi_{j+1}$.

\medbreak

Case (a). The transverse eigenvalues for the cycle of length three are all negative, except for $e_{24}$ at $\xi_2$. For this cycle the basic transition matrices are of the form \eqref{eq:Mj} where $A_j$ is the identity, $N=3$, and all entries are non-negative except $b_{2,2} = -e_{24}/e_{23} < 0$ in $M_2$. For stability calculations we need the products of basic transition matrices that correspond to a full return around the cycle. Due to the lower triangular structure of the matrices $M_j$, which is preserved under multiplication, these are of the form
$$
M=\left[\begin{array}{ccc}
\alpha_1 & 0 &  0\\
\alpha_2 & 1 & 0\\
\alpha_3 & 0 & 1
\end{array}\right],
$$
where the $\alpha_j$ are sums of products of some quotients of eigenvalues such that $\alpha_1, \alpha_3 >0$ but potentially $\alpha_2 < 0$.
The top left $2 \times 2$ block is as in Subsection 4.2.1 in \cite{PodAsh2011} concerning the $B_3^-$ cycles in $\R^4$. Because of the triangular structure, the extra dimension does not interfere with the results -- except where the eigenvector for the biggest eigenvalue is concerned, since this is now a vector in $\R^3$. Let $(u_1,u_2,u_3)$ be this eigenvector and $\lambda = \frac{c_{13}c_{21}c_{32}}{e_{12}e_{23}e_{31}}$. It must be $\lambda > 1$ for stability by Lemma 5 in \cite{Pod2012}. The third equation from the definition of an eigenvector for a matrix $M$ as above is
$$
\alpha_3 u_1 +u_3 = \lambda u_3 \quad \Leftrightarrow \quad u_3 = \dfrac{\alpha_3}{\lambda -1} u_1.
$$
So, the sign of the coordinate $u_3$ is the same as that of $u_1$ when $\lambda>1$. Therefore, the stability of the cycle of length three in the House network can be obtained from the results in \cite{PodAsh2011}.

\bigbreak

Case (b). In this instance for the cycle of length four both $\xi_1$ and $\xi_2$ have one positive transverse eigenvalue, respectively, $t_{14}$ and $e_{23}$.
The basic transition matrices have the form (in this case $A_j$ is a non-trivial permutation exchanging the first two rows of a matrix):
$$
M_1=\left[\begin{array}{ccc}
-\dfrac{t_{14}}{e_{12}} & 1 &  0\\
 & & \\
\dfrac{c_{15}}{e_{12}} & 0 & 0\\
 & & \\
\dfrac{c_{13}}{e_{12}} & 0 & 1
\end{array}\right], \mbox{  }
M_2=\left[\begin{array}{ccc}
\dfrac{t_{25}}{e_{24}} & 1 &  0\\
 & & \\
\dfrac{c_{21}}{e_{24}} & 0 & 0\\
 & & \\
-\dfrac{e_{23}}{e_{24}} & 0 & 1
\end{array}\right],  \mbox{  }
$$
$$
M_4=\left[\begin{array}{ccc}
\dfrac{t_{41}}{e_{45}} & 1 &  0\\
 & & \\
\dfrac{c_{42}}{e_{45}} & 0 & 0\\
 & & \\
\dfrac{t_{43}}{e_{45}} & 0 & 1
\end{array}\right], 
\mbox{ and }
M_5=\left[\begin{array}{ccc}
\dfrac{t_{52}}{e_{51}} & 1 &  0\\
 & & \\
\dfrac{c_{54}}{e_{51}} & 0 & 0\\
 & & \\
\dfrac{t_{53}}{e_{51}} & 0 & 1
\end{array}\right].
$$
The products of these matrices take the form 
$$
M=\left[\begin{array}{ccc}
\alpha_1 & \beta_1 &  0\\
\alpha_2 & \beta_2 & 0\\
\alpha_3 & \beta_3 & 1
\end{array}\right],
$$
where $\beta_1, \beta_2 > 0$ but the other parameters may have either sign. Tedious calculations show that $\det{M}$ is the ratio between contracting and expanding eigenvalues leading to the usual necessary condition for stability given by $\det{M}>1$. Because of the form of the matrices the condition that the eigenvalue with maximum absolute value is real and greater than one can be checked from the results in \cite{PodAsh2011} concerning the $C_4^-$ cycle, using the top left $2 \times 2$ block. Denote this eigenvalue by $\lambda$. The eigenvector condition in Lemma 5 in \cite{Pod2012} that all the coordinates of the eigenvector $(u_1,u_2,u_3)$ for the eigenvalue $\lambda$ must have the same sign for stability involves not only the $2\times2$ top left block but also the equation
$$
\alpha_3 u_1 + \beta_3 u_2 +u_3 = \lambda u_3 \Leftrightarrow (\lambda -1)u_3 = \alpha_3 u_1 + \beta_3 u_2.
$$
Whether this provides the same or a different sign for $u_3$ depends on the signs and magnitudes of $\alpha_3$ and $\beta_3$ and no longer follows from existing calculations.

\end{document}